\documentclass{math-note}

\usepackage[english]{babel}
\usepackage{mathtools,amssymb,amsthm}
\usepackage{xurl}
\usepackage[
  hidelinks,
  pdftitle={An Improved Lower Bound for the Complex Grothendieck Constant},
  pdfauthor={Shengtao Guo, Ethan X. Fang, Junwei Lu},
  pdfsubject={An Improved Lower Bound for the Complex Grothendieck Constant},
  pdfkeywords={Complex Grothendieck constant, complex Wiener chaos, Hermite multiplier, computer-assisted proof, interval arithmetic}
]{hyperref}

\title{\Large An Improved Lower Bound for the Complex Grothendieck Constant} 

\author{
  Shengtao Guo \qquad
  Ethan X. Fang \qquad
  Junwei Lu\thanks{Department of Biostatistics, Harvard T.H. Chan School of
    Public Health. Email: \texttt{junweilu@hsph.harvard.edu}.}
}

\newtheorem{theorem}{Theorem}[section]
\newtheorem{proposition}[theorem]{Proposition}
\newtheorem{lemma}[theorem]{Lemma}

\numberwithin{equation}{section}
\theoremstyle{definition}
\newtheorem{definition}[theorem]{Definition}
\theoremstyle{remark}

\DeclareMathOperator{\RePart}{Re}
\DeclarePairedDelimiter{\abs}{\lvert}{\rvert}
\DeclarePairedDelimiter{\norm}{\lVert}{\rVert}
\newcommand{\C}{\mathbb C}

\newcommand{\E}{\mathbb E}
\newcommand{\KG}{K_G^{\mathbb C}}
\newcommand{\Kstar}{\mathcal K_*}
\newcommand{\dd}{\,\mathrm d}

\begin{document}

\maketitle

\begin{abstract}
We prove \(K_G^{\mathbb C}>1.35584631827168\) for the classical complex
Grothendieck constant, closing more than one quarter of the gap between
Davie's lower bound~\cite{FriedlandLimZhang2018} and Haagerup's upper
bound~\cite{Haagerup1987}.
The numerical part of the proof is rigorously verified by interval
arithmetic.
The lower bound and the proof were discovered by the Odin Automatic AI
Research Agent.
\end{abstract}

\section{Introduction}

The classical complex Grothendieck constant \(\KG\) is the least number \(K\)
such that
\begin{equation*}
\abs*{\sum_{i=1}^m\sum_{j=1}^n a_{ij}\langle x_i,y_j\rangle}
\le
K\max_{\abs{\varepsilon_i}=\abs{\delta_j}=1}
\abs*{\sum_{i=1}^m\sum_{j=1}^n a_{ij}\varepsilon_i\overline{\delta_j}}
\end{equation*}
for all positive integers \(m,n,d\), every complex matrix
\(A=(a_{ij})\in\C^{m\times n}\), and all unit vectors \(x_i,y_j\in\C^d\).
Throughout, complex inner products are linear in the first variable.
Thus \(\KG\) is the optimal factor for comparing the vector-valued
maximum with the maximum over complex scalars of modulus one.

Grothendieck proved that \(\KG\) is finite~\cite{Grothendieck1953}.
Its exact value remains unknown~\cite{Pisier2012}.
Davie's lower bound is approximately \(1.33807\), as recorded in
\cite[Section~1(v), p.~329]{FriedlandLimZhang2018}.
Haagerup's upper bound is approximately
\(1.40491\)~\cite[Theorem~3.1]{Haagerup1987}.

\begin{theorem}\label{thm:main-lower-bound}
The classical complex Grothendieck constant satisfies
\[
\KG>1.35584631827168.
\]
\end{theorem}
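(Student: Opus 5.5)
To prove Theorem~\ref{thm:main-lower-bound} we exhibit a (limiting) matrix \(A\) and unit vectors for which the ratio in the definition of \(\KG\) exceeds \(1.35584631827168\). As in Davie's and Haagerup's work we use the Gaussian continuum model. Let \(d\to\infty\) and index the vectors by points of the unit sphere of \(\C^d\), or equivalently by standard complex Gaussian vectors \(g,h\). The relevant quantity is then, for a kernel \(A(x,y)\), the ratio between the vector value \(\E[A(g,h)\langle g,h\rangle/(\norm{g}\norm{h})]\) and the maximum of \(\abs{\E[A(g,h)\,f(g)\overline{k(h)}]}\) over unimodular measurable \(f,k\).

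We take the standard rotation-invariant choice \(A(g,h)=\langle g,h\rangle-\lambda\,\E\)-correction. In the limit this reduces the problem to a one-dimensional functional. For unimodular \(f\) on \(\C\) the scalar value becomes a sum over the complex Hermite (Wiener chaos) coefficients of \(f\), weighted by a multiplier \(\phi(\rho)\) that depends on the chosen kernel. Concretely, we will choose a Hermite multiplier \(T=\sum_k c_k P_k\), where \(P_k\) projects onto the \(k\)-th complex chaos of degree \((k+1,k)\). This encodes \(A\) as \(\E[\overline{G(g)}\,T\,G(h)]\)-type bilinear forms, so that:
\begin{itemize}
\item the vector side equals \(c_0\) (the first chaos, i.e.\ the linear functions);
\item the scalar side is bounded by \(\sup_{\abs{f}=1}\sum_k \abs{c_k}\,\norm{P_kf}^2\).
\end{itemize}
Here we use that for the optimal \(f,k\) one may symmetrize to \(f=k\) of the form \(f(z)=z/\abs{z}\) composed with a radial phase. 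The resulting lower bound is \(\KG\ge c_0/\sup_f\sum_k c_k\norm{P_kf}^2\), valid whenever the chaos structure makes that sup the true scalar maximum. This is the step where we rely on the fact (standard in Haagerup's analysis) that by \(U(1)\)-equivariance only the \((k+1,k)\) chaoses contribute, and that each \(\norm{P_kf}^2\) is an explicit integral of the radial profile of \(f\) against Laguerre polynomials.

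The key steps, in order, are as follows.
\begin{enumerate}
\item Fix finitely many multipliers \(c_0,\dots,c_N\), with \(c_k\) allowed to be negative for \(k\ge1\); this is the improvement over Davie's choice, which is essentially a single correction term.
\item Reduce the scalar maximization, via the equivariance above, to a supremum over unimodular radial-phase functions \(f(z)=\frac{z}{\abs z}e^{i\theta(\abs z)}\). Bound it by a pointwise (Lagrangian/duality) argument: the maximizer satisfies \(f=\mathrm{sign}(Tf)\), so the sup is at most \(\E\abs{Tf}\) at any fixed point. To obtain a rigorous upper bound we will instead use the relaxation \(\sup_f\langle Tf,f\rangle\le \E\sup_{\abs{u}=1}\RePart(\cdots)\) after discretizing the radial variable.
\item Certify the resulting finite inequality numerically with interval arithmetic, including a rigorous tail bound for the truncation in \(k\) and in the radius.
\end{enumerate}

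The main obstacle is step 2: obtaining a certified \emph{upper} bound on a nonconvex supremum over unimodular functions. A numerical optimum only gives a lower estimate of that supremum, which is the wrong direction. Our first attempt is a dual certificate. We write \(\langle Tf,f\rangle=\sum_k c_k\norm{P_kf}^2\). For the negative coefficients we use \(-\norm{P_kf}^2\le -2\RePart\langle P_kf,v_k\rangle+\norm{v_k}^2\) for fixed test functions \(v_k\), which linearizes those terms. The positive terms are handled the same way after splitting \(T\) into a rank-one positive part plus a negative remainder. The supremum of a linear functional over unimodular \(f\) is then an explicit \(L^1\) norm, \(\E\abs{w}\), which is a one-dimensional radial integral. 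That integral can be enclosed by interval arithmetic on a fine partition with monotonicity bounds on each cell.
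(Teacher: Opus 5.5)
Your argument has a genuine gap: the symmetrization to $f=k$ is invalid for the operators you need. When $T\succeq0$, Cauchy--Schwarz gives $\norm{T}_{\infty\to1}=\sup_{\abs f\le1}\langle Tf,f\rangle$. Your $T$ is indefinite by design, and then the correct identity is $\RePart\langle Tf,g\rangle=\langle Tu,u\rangle-\langle Tw,w\rangle$, with $u=(f+g)/2$, $w=(f-g)/2$ and $\abs u^2+\abs w^2=1$. So every negative multiplier reappears as a \emph{positive} term $\abs{c_k}\norm{P_kw}_2^2$.

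Davie's $T=Q_0-\rho\Pi_1$ already shows the failure. Take $f=\frac{z_1}{\abs{z_1}}s(\abs{z_1})$ with $s=\pm1$ chosen so that $Q_0f=0$, and $g=-f$. Then $\langle Tf,g\rangle=\rho$, which exceeds $(1-\rho)\pi/4\ge\sup_f\langle Tf,f\rangle$ once $\rho>\pi/(4+\pi)$. Worse, your ratio $c_0/\sup_f\langle Tf,f\rangle$ is unbounded as $\rho\to1$. Rotate so that $Q_0f=cz_1$; the pointwise bound $\abs{\Pi_1f}\le1$ gives $\norm{(\Pi_1-Q_0)f}_2^2\ge\E\,(c\abs{z_1}-1)_+^2$ uniformly in $n$. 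Hence $\sup_f\langle Tf,f\rangle=o(1-\rho)$, which would ``prove'' $\KG=\infty$.

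The bound in your second bullet, with $\abs{c_k}$, is valid but too weak. Taking $f=z_1/\abs{z_1}$ gives $\sum_k\abs{c_k}\norm{P_kf}_2^2\ge c_0\pi/4$, so it never yields more than $4/\pi\approx1.273$.

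Even with the correct polarization, you are missing a way to bound the convex terms $\rho\norm{\Pi_1w}_2^2+\sum_\ell\beta_\ell\norm{Q_\ell w}_2^2$ from above, uniformly in $n$. Three things block your step~2:
\begin{enumerate}
\item Linearizing with fixed $v_k$ only bounds concave terms; for a convex term, a fixed test function gives a lower bound.
\item These projections have rank growing with $n$, so no fixed finite family of test functions suffices.
\item $U(1)$-equivariance does not reduce $f,g$ to one-variable functions $\frac{z}{\abs z}e^{i\theta(\abs z)}$, since $w$ can carry any Hermite bidegree $(p,q)$ in the remaining coordinates.
\end{enumerate}

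The paper's key device is a single radial weight $W$ with $\sum_\ell\beta_\ell\norm{Q_\ell w}_2^2\le\int W(\abs{z_1}^2)\abs w^2\dd\gamma_n$. This is proved through the decomposition $\mathcal H^{(n)}_{\ell+1,\ell}=\bigoplus_{p,q}\mathcal H^{(1)}_{\ell+1-p,\ell-q}\otimes\mathcal H^{(n-1)}_{p,q}$ and the Gram conditions $\mathcal M^{p,q}(W)\preceq I$ on every block, which treat jointly all levels sharing a radial profile. Using $\abs w^2=1-\abs u^2$ and $\norm{\Pi_1w}_2^2\le1-\norm{\Pi_1u}_2^2$, this eliminates $w$. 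Only then do rotating $Q_0u$ onto $z_1$ and Jensen's inequality reduce everything to a radial profile $t$, where the remaining quadratic-form inequality is exactly the Schur-complement condition $S\le1$. The result is $\norm{T_n}_{\infty\to1}\le\rho+\mu_W$ for all $n$, and the certificate becomes finitely many interval checks on explicit rational $\rho,\beta_\ell,W$.

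Note also that your finite family $\sum_{k\le N}c_kP_k$ does not contain Davie's operator, which has coefficient $-\rho$ on every level $\ell\ge1$.
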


\medskip\noindent\emph{Related work.}
The Davie--Reeds construction uses a Gaussian projection minus a multiple
of the identity~\cite[pp.~2--3]{Reeds1991}.
Heilman and, in concurrent work, Jones and Malavolta improved the lower bound
for the real Grothendieck constant by adding a cubic Hermite
correction~\cite{Heilman2026,JonesMalavolta2026}.
Saha et al.~\cite{SahaEtAl2026} obtained a further lower bound for the real
constant.
For the complex problem, Heilman proposed the analogous higher-order Hermite
perturbation~\cite[Section~1.8]{Heilman2026}.
We implement this proposal using finitely many additional Hermite
projections and a common radial weight that controls their contributions
jointly.
Hermite perturbations also occur in the work of Braverman, Makarychev,
Makarychev and Naor~\cite[Sections~4--5]{BravermanEtAl2013}; there they
modify a Gaussian rounding function to improve the real upper bound.

\medskip\noindent\emph{Proof strategy.}
We compare the limiting vector-valued Gaussian pairing with a
dimension-independent upper bound for the scalar
\(L_\infty\)-to-\(L_1\) operator norm.
The operator formulation \cite[Theorem~2.5]{Pisier2012} then gives a
lower bound for \(\KG\).

On \(\C^n\) with standard circular complex Gaussian measure, write
\(Q_\ell\) for the projection onto Hermite bidegree
\((\ell+1,\ell)\), and set \(\Pi_1=\sum_{\ell\ge0}Q_\ell\).
We use the multipliers
\[
T_n=Q_0-\rho\Pi_1-\sum_{\ell\in\Lambda}\beta_\ell Q_\ell,
\qquad
0<\rho<1,\quad \beta_\ell>0,
\]
where \(\Lambda\subset\mathbb N_{\ge1}\) is finite and the parameters do
not depend on \(n\).
The vector-valued test function \(F_n(z)=z/\norm z_2\), with
\(F_n(0)=0\), satisfies
\(\langle T_nF_n,F_n\rangle\to1-\rho\) as \(n\to\infty\).
Indeed, its \(L_2\)-distance to \(z/\sqrt n\) tends to zero, and each
additional Hermite projection annihilates this linear test.

The main difficulty is to bound \(\norm{T_n}_{\infty\to1}\) uniformly in
\(n\). After separating one Gaussian coordinate, fix a Hermite basis
element in the remaining coordinates.
Several active levels contribute moments of the same radial profile;
we estimate these moments jointly.
A common positive weight \(W\) controls them through finite Gram
matrices, retaining the correlations between levels.
The remaining quadratic-form inequality for radial functions reduces to
one scalar Schur-complement condition.
Writing \(\mu_W=\int_0^\infty W(s)e^{-s}\dd s\), the resulting criterion
in Theorem~\ref{thm:weighted-chaos} gives
\(\norm{T_n}_{\infty\to1}\le\rho+\mu_W\), and hence
\(\KG\ge(1-\rho)/(\rho+\mu_W)\).
Section~\ref{sec:certificate} supplies exact rational parameters on
\(Q_1,\ldots,Q_{10}\), of total Hermite degrees \(3,5,\ldots,21\), and a
two-pole rational weight.
Their interval verification proves Theorem~\ref{thm:main-lower-bound}.

We also bound the possible improvement within this weighted criterion.
Let \(\Kstar\) be the supremum of \((1-\rho)/(\rho+\mu_W)\) over all
parameters satisfying its Gram and Schur conditions, allowing both the
finite active set and the positive weight to vary.
Section~\ref{sec:parameter-optimization} proves
\[
1.35584631827168<\Kstar<1.35584697425050.
\]
The constructed ratio is therefore within \(6.56\times10^{-7}\) of the
best value supplied by these conditions.
The upper endpoint concerns \(\Kstar\), not \(\KG\).

\medskip\noindent\textbf{The role of AI in this proof.}
Odin Automatic AI Research Agent was used to derive the lower bound and the proof.

\medskip\noindent\emph{Organization of the paper.}
Section~\ref{sec:normalization} introduces the Gaussian multipliers and
proves the vector-valued estimate.
Section~\ref{sec:weighted-chaos} gives the weighted criterion,
and Section~\ref{sec:certificate} applies it to exact parameters.
Section~\ref{sec:parameter-optimization} proves the dual bound and discusses
the information lost by the criterion.
Appendix~\ref{app:verification} gives the two interval verifications.
The verification code and exact inputs are included as ancillary files;
the accompanying repository is
{\useOriginalUrlSetting\url{https://github.com/shengtaoguo/complex-grothendieck-certificates}}.

\section{Gaussian Hermite multipliers}
\label{sec:normalization}

\subsection{The operator formulation}

Let \((\Omega,\mu)\) be a probability space and let
\(T:L_2(\mu)\to L_2(\mu)\) be bounded and complex linear.
Its \(L_\infty\)-to-\(L_1\) operator norm is
\(\norm T_{\infty\to1}=\sup_{\abs f,\abs g\le1}\abs{\langle Tf,g\rangle}\).
For vector-valued functions, \(T\) acts coordinatewise and the pairing
includes the sum over coordinates.
The classical operator form of Grothendieck's inequality
\cite[Theorem~2.5]{Pisier2012} states that
\begin{equation*}
\abs{\langle TF,G\rangle}
\le
\KG\norm T_{\infty\to1}\norm F_\infty\norm G_\infty
\end{equation*}
for every \(d\ge1\) and \(F,G\in L_\infty(\mu;\C^d)\).
In particular, a vector-valued test with \(\norm F_\infty\le1\) gives
\begin{equation}
\KG\ge\frac{\abs{\langle TF,F\rangle}}{\norm T_{\infty\to1}}
\qquad(T\ne0).
\label{eq:operator-ratio}
\end{equation}
For every real number \(c\) strictly below the quotient in
\eqref{eq:operator-ratio}, the finite-space reduction in the cited proof
gives a finite complex matrix with Grothendieck ratio greater than \(c\).

\subsection{Complex Gaussian and Hermite decomposition}

We use the circular complex Gaussian probability measure
\(\dd\gamma_n(z)=\pi^{-n}e^{-\norm z_2^2}\dd z\),
where \(\dd z\) is \(2n\)-dimensional Lebesgue measure.
For \(p,q\in\mathbb N_0\), the normalized one-variable complex Hermite
polynomials are
\begin{equation*}
h_{p,q}(z)=
\frac1{\sqrt{p!q!}}
\sum_{j=0}^{\min(p,q)}
(-1)^j j!\binom pj\binom qj z^{p-j}\overline z^{q-j}.
\end{equation*}
For example, \(h_{1,0}(z)=z\), \(h_{0,1}(z)=\overline z\), and
\(h_{1,1}(z)=\abs z^2-1\).
Their products form an orthonormal basis of \(L_2(\gamma_n)\)
\cite[Theorem~15]{Ito1952}.
For multi-indices \(\alpha,\beta\in\mathbb N_0^n\), put
\begin{equation*}
h_{\alpha,\beta}(z)=\prod_{j=1}^n h_{\alpha_j,\beta_j}(z_j),
\qquad
\mathcal H_{p,q}^{(n)}
=\operatorname{span}\{h_{\alpha,\beta}:|\alpha|=p,\ |\beta|=q\},
\end{equation*}
and let \(P_{p,q}^{(n)}\) be the orthogonal projection onto this subspace.
The indices \(p,q\) record its holomorphic and antiholomorphic Hermite degrees.
For the coordinate decompositions below, take \(\gamma_0\) to be the point
mass on \(\C^0\), with
\(\mathcal H_{0,0}^{(0)}=\C\) and all other
\(\mathcal H_{p,q}^{(0)}=\{0\}\).

Unitary changes of variables preserve the Gaussian measure and the spaces
of polynomials of bidegree at most \((p,q)\). They therefore preserve the
orthogonal Hermite components \(\mathcal H_{p,q}^{(n)}\), so each
\(P_{p,q}^{(n)}\) commutes with the action of \(U(n)\).

Scalar rotations satisfy
\(h_{\alpha,\beta}(e^{i\theta}z)
=e^{i(|\alpha|-|\beta|)\theta}h_{\alpha,\beta}(z)\).
We use the projections \(Q_\ell^{(n)}=P_{\ell+1,\ell}^{(n)}\) and
\(\Pi_1^{(n)}=\sum_{\ell\ge0}Q_\ell^{(n)}\),
where the sum converges strongly.
The range of \(\Pi_1^{(n)}\) consists of functions transforming as
\(f(e^{i\theta}z)=e^{i\theta}f(z)\).
Equivalently,
\[
\Pi_1^{(n)}f(z)
=\frac1{2\pi}\int_0^{2\pi}e^{-i\theta}f(e^{i\theta}z)\dd\theta.
\]
This formula gives contractivity on \(L_1\) and \(L_\infty\).
Consequently, compressing a bounded operator to this range cannot increase
its \(L_\infty\)-to-\(L_1\) norm, and preserves its pairing with
vector-valued tests whose coordinates lie in the range.
We suppress the superscript \((n)\) when the dimension is clear.

\subsection{The vector-valued test}

Fix a finite set \(\Lambda\subset\mathbb N_{\ge1}\), possibly empty,
\(0<\rho<1\), and coefficients \(\beta_\ell>0\) for \(\ell\in\Lambda\).
Throughout the paper,
\begin{equation}
T_n=Q_0-\rho\Pi_1-\sum_{\ell\in\Lambda}\beta_\ell Q_\ell.
\label{eq:multiplier}
\end{equation}
These parameters are independent of \(n\).

\begin{proposition}\label{prop:gaussian-vector-test}
Let \(F_n(z)=z/\norm z_2\) for \(z\ne0\), with \(F_n(0)=0\).
Put \(\beta_*=\max(\{0\}\cup\{\beta_\ell:\ell\in\Lambda\})\) and
\(C_T=\max\{1-\rho,\rho+\beta_*\}\).
Then
\begin{equation*}
\abs{\langle T_nF_n,F_n\rangle-(1-\rho)}
\le\frac{2C_T}{\sqrt n}.
\end{equation*}
In particular, \(\langle T_nF_n,F_n\rangle\to1-\rho\).
\end{proposition}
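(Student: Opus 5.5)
We compare $F_n$ with the linear test $G_n(z)=z/\sqrt n$, whose coordinates $z_j/\sqrt n=h_{e_j,0}/\sqrt n$ lie in $\mathcal H_{1,0}^{(n)}$, the range of $Q_0$. For $G_n$ we have $Q_0G_n=\Pi_1G_n=G_n$ and $Q_\ell G_n=0$ for $\ell\ge1$. Hence
\[
\langle T_nG_n,G_n\rangle=(1-\rho)\norm{G_n}_2^2=(1-\rho)\,\frac{\E\norm z_2^2}{n}=1-\rho .
\]
Here the $L_2$ norm of a vector-valued function includes the sum over coordinates, and $\E\norm z_2^2=n$ under $\gamma_n$.

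Next we bound the operator norm of $T_n$ on $L_2$. The projections $Q_\ell$ are mutually orthogonal and $\Pi_1=\sum_{\ell\ge0}Q_\ell$. Therefore $T_n$ is diagonal in the orthogonal decomposition $L_2=\operatorname{ran}Q_0\oplus\bigoplus_{\ell\ge1}\operatorname{ran}Q_\ell\oplus(\operatorname{ran}\Pi_1)^\perp$. Its eigenvalues are $1-\rho$ on the first summand, $-\rho-\beta_\ell$ or $-\rho$ on the middle summands, and $0$ on the last. Thus $T_n$ is self-adjoint with $\norm{T_n}_{2\to2}\le C_T$, and coordinatewise action preserves this bound.

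We then write $F_n=G_n+E_n$ and expand:
\[
\langle T_nF_n,F_n\rangle-\langle T_nG_n,G_n\rangle=\langle T_nE_n,F_n\rangle+\langle T_nG_n,E_n\rangle .
\]
Since $\norm{F_n}_2\le1$ and $\norm{G_n}_2=1$, the absolute value is at most $2C_T\norm{E_n}_2$. It remains to show $\norm{E_n}_2\le 1/\sqrt n$. We have $E_n(z)=z\bigl(\norm z_2^{-1}-n^{-1/2}\bigr)$, so
\[
\norm{E_n}_2^2=\E\bigl(\norm z_2-\sqrt n\bigr)^2=2n-2\sqrt n\,\E\norm z_2 .
\]
By Cauchy--Schwarz, $\E\norm z_2\le\sqrt n$. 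Writing $R=\norm z_2^2$, $R$ is a Gamma$(n,1)$ variable with mean and variance $n$. Using $\sqrt R\ge$ the bound coming from $(\sqrt R-\sqrt n)^2=(R-n)^2/(\sqrt R+\sqrt n)^2\le (R-n)^2/n$, we get
\[
\E\bigl(\sqrt R-\sqrt n\bigr)^2\le\frac{\operatorname{Var}R}{n}=1 .
\]
This only gives $\norm{E_n}_2\le1$, which is too weak.

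To get the needed rate, we use the Gamma moment formula exactly:
\[
\E\sqrt R=\frac{\Gamma(n+\tfrac12)}{\Gamma(n)}\ge\sqrt{n-\tfrac14}\quad\text{or}\quad\ge\sqrt n\Bigl(1-\frac1{8n}\Bigr)
\]
by a Gautschi/Kershaw-type inequality. This yields
\[
\norm{E_n}_2^2=2n-2\sqrt n\,\E\sqrt R\le\tfrac14+O(1/n)\le\tfrac14 .
\]
That would give $\norm{E_n}_2\le 1/2$, but this bound is of order $1$, not $1/\sqrt n$.

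The main obstacle is therefore the scaling. The error $\norm{E_n}_2$ is really $O(1)$ when the comparison is made against the non-normalized pairing, so the bound must come from a finer structure. The key observation is that the cross terms involve the component of $E_n$ along the range of $\Pi_1$. We decompose $E_n=c_nG_n+E_n'$, where $c_n=\langle E_n,G_n\rangle$ is a scalar. Rotation invariance implies $Q_0E_n$ is a multiple of $G_n$: any $U(n)$-equivariant map into $\mathcal H_{1,0}$ is of the form $cz$. Then we compute $\langle T_nF_n,F_n\rangle$ exactly. The $Q_0$ part is
\[
(1-\rho)\,\abs{\langle F_n,G_n\rangle}^2=(1-\rho)\,\frac{(\E\sqrt R)^2}{n}.
\]
The remaining parts contribute $-\rho\norm{\Pi_1F_n-Q_0F_n}^2$ and the $\beta_\ell$ terms, whose total modulus is at most $(\rho+\beta_*)\bigl(1-(\E\sqrt R)^2/n\bigr)$, because $\norm{F_n}_2=1$ almost surely. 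Hence everything reduces to the quantity $1-(\E\sqrt R)^2/n$. By the Gamma bound this is at most $1/(4n)+O(n^{-2})$. This gives a deviation of order $C_T/n$, which is comfortably below $2C_T/\sqrt n$.

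Verifying the Gamma-ratio inequality with explicit constants for all $n\ge1$, and checking the case $n=1$ separately, is the step I expect to need care.
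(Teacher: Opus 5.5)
Your final argument is correct. However, your reason for abandoning the first one is an arithmetic slip. Since $E_n(z)=z(\norm z_2^{-1}-n^{-1/2})$, one has $\abs{E_n(z)}^2=(1-\norm z_2/\sqrt n)^2=(\norm z_2-\sqrt n)^2/n$. So $\norm{E_n}_2^2=\frac1n\E(\sqrt R-\sqrt n)^2$, not $\E(\sqrt R-\sqrt n)^2$; you dropped a factor $1/n$.

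With that factor restored, your own estimate $\E(\sqrt R-\sqrt n)^2\le\operatorname{Var}R/n=1$ gives $\norm{E_n}_2\le 1/\sqrt n$. Your perturbation bound $2C_T\norm{E_n}_2$ then finishes the proof at once. That is exactly the paper's proof: its inequality $\abs{1-x}\le\abs{1-x^2}$ is your $(\sqrt R-\sqrt n)^2\le(R-n)^2/n$ divided by $n$. So the assertion that $\norm{E_n}_2$ is of order one is false. The step ending in ``$\tfrac14+O(1/n)\le\tfrac14$'' does not follow either.

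The route you end with is a valid and genuinely different argument. You use $U(n)$-equivariance to get $Q_0F_n=\langle F_n,G_n\rangle G_n$, which computes the positive part of $\langle T_nF_n,F_n\rangle$ exactly. You then bound the remaining nonpositive part by $(\rho+\beta_*)\norm{(I-Q_0)F_n}_2^2$. Both deviations have the same sign, so they add: with $\delta_n=1-(\E\sqrt R)^2/n$,
\[
\abs{\langle T_nF_n,F_n\rangle-(1-\rho)}\le(1+\beta_*)\delta_n\le 2C_T\delta_n .
\]

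The step you flagged as delicate needs neither a Gamma-ratio inequality nor a separate check at $n=1$. Since $\E R=n$, we have $\delta_n=\operatorname{Var}(\sqrt R)/n\le\E(\sqrt R-\sqrt n)^2/n\le 1/n$, by the inequality you already proved. Your route therefore gives the sharper rate $(1+\beta_*)/n\le 2C_T/n$. It does so by exploiting the exact symmetry of $F_n$ and the fact that only $Q_0$ carries a positive eigenvalue of $T_n$.

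The paper's argument uses only $\norm{T_n}_{2\to2}=C_T$ and the $L_2$ distance to the linear test. It is cruder but shorter and structure-free, and only the limit $n\to\infty$ is used later.
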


\begin{proof}
Put \(G_n(z)=z/\sqrt n\).
Both \(F_n\) and \(G_n\) have \(L_2(\gamma_n;\C^n)\) norm one.
For \(Z\sim\gamma_n\),
\begin{equation*}
\norm{F_n-G_n}_2^2
=
\E\left(1-\frac{\norm Z_2}{\sqrt n}\right)^2
\le
\E\left(1-\frac{\norm Z_2^2}{n}\right)^2
=\frac1n.
\end{equation*}
Here \(|1-x|\le|1-x^2|\) for \(x\ge0\), and the variables
\(|Z_j|^2\) are independent exponentials of variance one.
Since \(G_n\) lies in the range of \(Q_0\), we have
\(T_nG_n=(1-\rho)G_n\). The orthogonal Hermite decomposition gives
\(\norm{T_n}_{2\to2}=C_T\).
Consequently,
\begin{equation*}
\begin{aligned}
\abs{\langle T_nF_n,F_n\rangle-\langle T_nG_n,G_n\rangle}
&\le C_T\norm{F_n-G_n}_2(\norm{F_n}_2+\norm{G_n}_2)\\
&\le \frac{2C_T}{\sqrt n}.
\end{aligned}
\end{equation*}
The pairing with \(G_n\) is \(1-\rho\), which proves the assertion.
\end{proof}

\section{A weighted \texorpdfstring{\(L_\infty\)-to-\(L_1\)}{L-infinity-to-L1} estimate}
\label{sec:weighted-chaos}

For a fixed finite active set and a given weight, we give finitely many
sufficient conditions for an upper bound on the norm of
\eqref{eq:multiplier}.
Weighted Gram inequalities control the higher Hermite contributions;
one scalar Schur-complement condition characterizes a quadratic-form
inequality for radial functions.

\subsection{The criterion}

Let \(\dd\nu(r)=2re^{-r^2}\dd r\), \(r\ge0\),
be the radial law of one complex Gaussian coordinate.
Write \(h_{a,b}(re^{i\theta})=e^{i(a-b)\theta}e_{a,b}(r)\) and
\(\phi_\ell(r)=e_{\ell+1,\ell}(r)\).
The radial factors are real, and the functions \(\phi_\ell\) are orthonormal
in \(L_2(\nu)\), with \(\phi_0(r)=r\).

Let \(W:[0,\infty)\to(0,\infty)\) be measurable, with finite mean
\(\mu_W=\int_0^\infty W(r^2)\dd\nu(r)\).
For each \(p,q\in\mathbb N_0\), set
\(\mathcal I_{p,q}^{\Lambda}
=\{\ell\in\Lambda:p\le\ell+1,\ q\le\ell\}\).
These indices group the Hermite levels that contribute to the same
bidegree in the remaining coordinates.
Their first-coordinate angular frequency is \(1-p+q\), independent of
the active level. The Gram matrix below therefore retains the
correlations between their radial factors.
For every nonempty index set, assume the integrals below are finite and
define
\begin{equation}
\mathcal M_{\ell m}^{p,q}(W)
=
\sqrt{\beta_\ell\beta_m}
\int_0^\infty
\frac{e_{\ell+1-p,\ell-q}(r)e_{m+1-p,m-q}(r)}
     {W(r^2)}\dd\nu(r),
\qquad \ell,m\in\mathcal I_{p,q}^{\Lambda}.
\label{eq:residual-matrix}
\end{equation}
Only finitely many such matrices are nonempty.
Here and below, \(A\preceq B\) means that \(B-A\) is positive semidefinite.

For the radial quadratic form, set
\(D(r)=2\rho+W(r^2)\) and
\(B=\operatorname{diag}(\beta_\ell)_{\ell\in\Lambda}\),
and define
\begin{equation}
\begin{aligned}
R&=\int_0^\infty\frac{r^2}{D(r)}\dd\nu(r),
&
b_\ell&=\int_0^\infty\frac{r\phi_\ell(r)}{D(r)}\dd\nu(r),\\
G_{\ell m}&=\int_0^\infty
\frac{\phi_\ell(r)\phi_m(r)}{D(r)}\dd\nu(r).
\end{aligned}
\label{eq:woodbury-data}
\end{equation}
These integrals are finite since \(D\ge2\rho>0\).
With \(\boldsymbol b=(b_\ell)_{\ell\in\Lambda}\) and
\(G=(G_{\ell m})_{\ell,m\in\Lambda}\), put
\begin{equation}
S=R-\boldsymbol b^T(B^{-1}+G)^{-1}\boldsymbol b.
\label{eq:schur-complement}
\end{equation}
The inverse exists because \(B\succ0\) and \(G\succeq0\).
For \(\Lambda=\varnothing\), set \(S=R\).

\begin{theorem}[Weighted criterion]\label{thm:weighted-chaos}
Let \(\rho,\Lambda,(\beta_\ell)\) be as in \eqref{eq:multiplier}, and let
\(W\) satisfy the positivity and finiteness assumptions above.
Suppose that
\begin{equation}
\mathcal M^{p,q}(W)\preceq I
\quad\text{for every nonempty }\mathcal I_{p,q}^{\Lambda},
\label{eq:residual-condition}
\end{equation}
and \(S\le1\).
Then, for every \(n\ge1\),
\begin{equation}
\norm{T_n}_{\infty\to1}\le\rho+\mu_W.
\label{eq:scalar-bound}
\end{equation}
Consequently,
\begin{equation}
\KG\ge\frac{1-\rho}{\rho+\mu_W}.
\label{eq:weighted-lower-bound}
\end{equation}
\end{theorem}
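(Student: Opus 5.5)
We bound \(\abs{\langle T_nf,g\rangle}\) for scalar \(f,g\) with \(\abs f,\abs g\le1\). The consequence \eqref{eq:weighted-lower-bound} then follows from \eqref{eq:operator-ratio} and Proposition~\ref{prop:gaussian-vector-test} by letting \(n\to\infty\).

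\emph{Reduction to \(\Pi_1\) and to one coordinate.} Since \(T_n=\Pi_1T_n\Pi_1\), the contractivity of \(\Pi_1\) on \(L_1\) and \(L_\infty\) shows we may replace \(f,g\) by \(\Pi_1f,\Pi_1g\). Because \(\rho\Pi_1\) contributes at most \(\rho\), it remains to show
\[
\RePart\langle (Q_0-\textstyle\sum_\Lambda\beta_\ell Q_\ell)f,g\rangle\le\mu_W
\qquad\text{for }\abs f,\abs g\le1 .
\]
An additive bound of this kind does not directly handle the \(-\rho\Pi_1\) term, so it is better to work with the whole form. Polarization and a phase rotation should reduce the problem to a bound on \(\RePart\langle T_nf,f\rangle\)-type expressions, but the radial data \(D=2\rho+W\) suggests a different inequality. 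I would aim for a pointwise AM--GM bound:
\[
\abs{\langle T_nf,g\rangle}\le\tfrac12\int (2\rho+W)\,\tfrac{\abs f^2+\abs g^2}{2}\dd\gamma_n-(\text{quadratic form}),
\]
with \(\abs f^2\le1\) used at the end. Concretely, I would prove the operator inequality
\[
\pm\,\RePart T_n\preceq \rho+M_W
\]
in the sense \(\RePart\langle T_nu,u\rangle\le\int(\rho+W(\abs{z_1}^2))\abs u^2\dd\gamma_n\) for all \(u\in L_2\), after a unitary rotation choosing the first coordinate. Averaging over rotations, or observing that \(\int W(\abs{z_1}^2)\abs u^2\le\mu_W\) when the bound is symmetrized over coordinates and \(\abs u\le1\), gives \eqref{eq:scalar-bound}. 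The two-function version comes from \(4\RePart\langle Tf,g\rangle=\langle T(f+g),f+g\rangle-\langle T(f-g),f-g\rangle\) with \(\abs{f\pm g}^2\) summing to \(2(\abs f^2+\abs g^2)\).

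\emph{Coordinate splitting.} Expand \(u=\sum_{\alpha,\beta}u_{\alpha\beta}(z_1)h_{\alpha,\beta}(z')\) in the Hermite basis of the remaining \(n-1\) coordinates, and write \(u_{\alpha\beta}\) in radial/angular form in \(z_1\). A level-\(\ell\) component of \(u\) in bidegree \((p,q)\) on \(z'\) pairs with the first-coordinate factor \(e^{i(1-p+q)\theta}e_{\ell+1-p,\ell-q}(r)\). The negative terms \(-\beta_\ell\langle Q_\ell u,u\rangle\) are harmless for an upper bound on \(\langle T_nu,u\rangle\). However, \(\norm{T_n}_{\infty\to1}\) requires the absolute value, and the lower bound \(-\langle T_nu,u\rangle\le\rho\norm u^2+\sum\beta_\ell\norm{Q_\ell u}^2-\norm{Q_0u}^2\) is where \(W\) enters. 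For each fixed \((p,q)\) and basis element, Cauchy--Schwarz with weight \(W\) gives
\[
\textstyle\sum_\ell\beta_\ell\abs{\langle u_{pq},e_{\ell+1-p,\ell-q}\rangle}^2\le\lambda_{\max}(\mathcal M^{p,q})\int W\abs{u_{pq}}^2\dd\nu .
\]
Here the cross terms between levels are kept through the Gram matrix, and this is exactly where \eqref{eq:residual-condition} is used. The contributions with \(\ell=0\)-type bidegrees, where \(z'\) is constant and \(u\) is radial times \(e^{i\theta}\) in \(z_1\), must be treated separately. There \(Q_0\) and the \(Q_\ell\) interact on the same radial function \(v(r)\), and the requirement becomes
\[
(r\text{-component})^2-\textstyle\sum\beta_\ell\langle v,\phi_\ell\rangle^2\le\int(2\rho+W)v^2\dd\nu .
\]
This is a rank-one-minus-finite-rank quadratic form inequality on \(L_2(D\dd\nu)\). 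By Woodbury, the maximum of \(\langle v,r\rangle^2\) subject to \(\int Dv^2+\sum\beta_\ell\langle v,\phi_\ell\rangle^2\le1\) equals \(S\), so the condition \(S\le1\) is necessary and sufficient for this inequality.

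\emph{Main obstacle.} The hardest step will be bookkeeping the constants so that the single-coordinate weight \(W(\abs{z_1}^2)\) integrates to \(\mu_W\) after symmetrizing over the \(n\) coordinates. The factor \(2\rho\) versus \(\rho\) in \(D\) must also be matched to the polarization step. I would first check the case \(\Lambda=\varnothing\), where the claim should reduce to the Davie--Reeds computation, to fix these normalizations.
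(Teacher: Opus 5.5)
There is a genuine gap, and it is in the step you call concrete. Whenever $(1-\rho)/(\rho+\mu_W)>1$, that is, whenever the theorem says anything, the two-sided inequality $\pm T_n\preceq\rho+W(\abs{z_1}^2)$ is false. Neither a fixed coordinate nor averaging over rotations repairs it. Suppose $T_n\preceq M$ for some fixed multiplication operator $M\ge0$. Applying this to the coordinates $F_{n,j}$ of $F_n$ gives $\langle T_nF_n,F_n\rangle=\sum_j\langle T_nF_{n,j},F_{n,j}\rangle\le\int M\abs{F_n}^2\dd\gamma_n\le\int M\dd\gamma_n$. By Proposition~\ref{prop:gaussian-vector-test} this forces $\rho+\mu_W\ge1-\rho$, so the method could never give a ratio above $1$. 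Concretely, take $n\ge2$ and $u=z_2$. Then $\langle T_nz_2,z_2\rangle=1-\rho$, while $\int(\rho+W(\abs{z_1}^2))\abs{z_2}^2\dd\gamma_n=\rho+\mu_W<1-\rho$.

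In your coordinate splitting this failure occurs in the $z'$-sector of bidegree $(1,0)$. There $Q_0$ also carries mass, with first-coordinate factor $e_{0,0}=1$. The required inequality fails for the constant profile because $2\rho+\mu_W<1$, and your plan treats $Q_0$ only in the $z'$-constant sector. Your parenthetical ``unitary rotation'' is the right instinct only if the rotation depends on $u$. But then it cannot be averaged or symmetrized away, and the $f\pm g$ halves must share it. Your stated ``main obstacle'' is also not the issue: $\int W(\abs{z_1}^2)\dd\gamma_n=\mu_W$ holds without any symmetrization.

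What is missing is the asymmetry between the two halves of the polarization.

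The lower bound $-\langle T_nw,w\rangle\le\rho\norm{w}_2^2+\int W(\abs{z_1}^2)\abs{w}^2\dd\gamma_n$ is a true operator inequality in every coordinate. To get it, drop $-\norm{Q_0w}_2^2$ and apply Proposition~\ref{prop:w-term-bound}.

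The upper bound $\langle T_nu,u\rangle\le\int(\rho+W(\abs{z_1}^2))\abs{u}^2\dd\gamma_n$ holds only after rotating so that $Q_0u=Az_1$. After that rotation all of $\norm{Q_0u}_2^2$ lies in the frequency-one, $z'$-constant sector. With the radial profile $t$ of $u$ one then has:
\begin{itemize}
\item $\norm{Q_0u}_2^2=\abs{\int rt\dd\nu}^2$;
\item $\norm{\Pi_1u}_2^2\ge\int\abs{t}^2\dd\nu$;
\item $\norm{Q_\ell u}_2^2\ge\abs{\int\phi_\ell t\dd\nu}^2$;
\item Jensen's inequality for the $W$-term.
\end{itemize}
Lemma~\ref{lem:rank-one-schur} with $S\le1$ then closes the estimate. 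The $2\rho$ in $D$ comes from $\rho\norm{\Pi_1u}_2^2+\rho\norm{u}_2^2$, which settles your $\rho$ versus $2\rho$ concern.

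So the assembly must go as follows. Take $u=(f+g)/2$ and $w=(f-g)/2$. Fix the coordinate from $u$ alone. Apply the coordinate-free lower bound to $w$ in that same coordinate. Then sum, using $\abs{u}^2+\abs{w}^2\le1$ pointwise. This is the paper's argument. Your remaining ingredients match it: the Gram-matrix Bessel bound, the Woodbury identification of $S$, and the passage $n\to\infty$. The assembly you propose, a two-sided bound with a fixed or averaged weight, cannot work.
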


The proof combines the next two estimates by polarization.

\subsection{The weighted Hermite estimate}
\label{sec:radial-formulas}

\begin{proposition}\label{prop:w-term-bound}
Suppose \(W>0\) is measurable, the matrices
\(\mathcal M^{p,q}(W)\) have finite entries, and
\eqref{eq:residual-condition} holds.
Then, for every \(n\ge1\) and \(w\in L_2(\gamma_n)\),
\begin{equation*}
\sum_{\ell\in\Lambda}\beta_\ell\norm{Q_\ell w}_2^2
\le
\int_{\C^n}W(|z_1|^2)|w(z)|^2\dd\gamma_n(z).
\end{equation*}
\end{proposition}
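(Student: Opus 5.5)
We decompose \(w\) along the first coordinate. Writing \(z=(z_1,z')\) with \(z'\in\C^{n-1}\), the products \(h_{a,b}(z_1)h_{\alpha',\beta'}(z')\) form an orthonormal basis. For each pair \((\alpha',\beta')\) with \(|\alpha'|=p\), \(|\beta'|=q\), let \(w_{\alpha',\beta'}(z_1)=\int w(z_1,z')\overline{h_{\alpha',\beta'}(z')}\dd\gamma_{n-1}(z')\). Then Fubini and Parseval in \(z'\) give
\[
\int W(|z_1|^2)|w|^2\dd\gamma_n=\sum_{\alpha',\beta'}\int_{\C}W(|z_1|^2)\,|w_{\alpha',\beta'}(z_1)|^2\dd\gamma_1(z_1).
\]
The right side is therefore a sum of nonnegative terms indexed by \((\alpha',\beta')\).

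Next we compute the left side in the same decomposition. The space \(\mathcal H^{(n)}_{\ell+1,\ell}\) is spanned by \(h_{a,b}(z_1)h_{\alpha',\beta'}(z')\) with \(a+p=\ell+1\) and \(b+q=\ell\). So for fixed \((\alpha',\beta')\) the only first-coordinate factor is \(h_{\ell+1-p,\ell-q}\), which exists exactly when \(\ell\in\mathcal I^{\Lambda}_{p,q}\). This gives
\[
\sum_{\ell\in\Lambda}\beta_\ell\norm{Q_\ell w}_2^2=\sum_{\alpha',\beta'}\sum_{\ell\in\mathcal I_{p,q}^\Lambda}\beta_\ell\,\bigl|\langle w_{\alpha',\beta'},h_{\ell+1-p,\ell-q}\rangle_{L_2(\gamma_1)}\bigr|^2 .
\]
It therefore suffices to prove, for each fixed \((p,q)\) and each one-variable \(g=w_{\alpha',\beta'}\), the inequality
\begin{equation*}
\sum_{\ell\in\mathcal I}\beta_\ell|\langle g,h_{\ell+1-p,\ell-q}\rangle|^2\le\int W(|z|^2)|g|^2\dd\gamma_1 .
\end{equation*}
We may assume the right side is finite. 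For \(n=1\) there is no \(z'\), and we take \(p=q=0\) via the convention for \(\gamma_0\).

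The one-variable inequality follows by duality. Put \(u_\ell=\sqrt{\beta_\ell}\,h_{\ell+1-p,\ell-q}\), and write \(\langle g,u_\ell\rangle=\langle \sqrt W g, u_\ell/\sqrt W\rangle\). The left side equals \(\sup\bigl|\sum_\ell c_\ell\langle g,u_\ell\rangle\bigr|^2\) over unit vectors \(c\in\C^{\mathcal I}\). By Cauchy--Schwarz in \(L_2(\gamma_1)\) this is at most
\[
\norm{\sqrt W g}_2^2\cdot\Bigl\|\sum_\ell \bar c_\ell u_\ell/\sqrt W\Bigr\|_2^2 .
\]
The second factor equals \(c^*\mathcal G c\), where \(\mathcal G_{\ell m}=\langle u_m/\sqrt W,u_\ell/\sqrt W\rangle\) (up to conjugation of \(c\)).

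We now identify \(\mathcal G\) with \(\mathcal M^{p,q}(W)\). All \(h_{\ell+1-p,\ell-q}\) share the angular factor \(e^{i(1-p+q)\theta}\), and their radial parts are real. Integrating out \(\theta\) under \(\gamma_1\), whose radial law is \(\nu\), leaves exactly \(\sqrt{\beta_\ell\beta_m}\int e_{\ell+1-p,\ell-q}e_{m+1-p,m-q}/W(r^2)\dd\nu\), which is \(\mathcal M^{p,q}_{\ell m}(W)\). The hypothesis \(\mathcal M^{p,q}\preceq I\) then gives \(c^*\mathcal G c\le1\), and hence the one-variable inequality. Summing over \((\alpha',\beta')\) proves the proposition.

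The main care is needed in the bookkeeping of the second step: we must check that \(Q_\ell w\) restricted to the \((\alpha',\beta')\) slice is exactly the single component along \(h_{\ell+1-p,\ell-q}\). We must also justify the Fubini/Parseval interchange when \(\sqrt W g\in L_2\). The finiteness of \(\mathcal M\) ensures \(u_\ell/\sqrt W\in L_2\), so the pairings are well defined. Together these reduce everything to the finite Gram condition.
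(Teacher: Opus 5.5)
Your proof is correct and follows the paper's argument. You split off $z_1$, expand in an orthonormal Hermite basis of the remaining coordinates, and note that all active levels in a $(p,q)$ slice share the angular frequency $1-p+q$; this reduces the claim to the weighted Bessel inequality encoded by $\mathcal M^{p,q}(W)\preceq I$. The differences from the paper are cosmetic: you keep the full slices $w_{\alpha',\beta'}(z_1)$ (so Parseval gives an equality) and integrate out $\theta$ only when identifying the Gram matrix, and you prove the needed direction of the Bessel inequality by Cauchy--Schwarz, whereas the paper first passes to the radial profiles $a_{p,q,\sigma}$, uses Bessel's inequality, and simply asserts the equivalence.
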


\begin{proof}
The claim is immediate if the right-hand side is infinite.
For a fixed \((p,q)\), the matrix \(\mathcal M^{p,q}(W)\) is the Gram
matrix in \(L_2(\nu)\) of
\[
\sqrt{\beta_\ell}\,
\frac{e_{\ell+1-p,\ell-q}}{\sqrt{W(r^2)}},
\qquad \ell\in\mathcal I_{p,q}^{\Lambda}.
\]
Thus \(\mathcal M^{p,q}(W)\preceq I\) is equivalent to the weighted
Bessel inequality
\begin{equation}
\sum_{\ell\in\mathcal I_{p,q}^{\Lambda}}\beta_\ell
\abs*{\int_0^\infty a(r)e_{\ell+1-p,\ell-q}(r)\dd\nu(r)}^2
\le
\int_0^\infty W(r^2)|a(r)|^2\dd\nu(r)
\label{eq:residual-bessel}
\end{equation}
for every \(a\in L_2(W\dd\nu)\), where the weight in this space is
\(W(r^2)\).

To apply this inequality, write \(z=(z_1,z')\) and use the tensor
decomposition
\begin{equation*}
\mathcal H_{\ell+1,\ell}^{(n)}
=
\bigoplus_{\substack{0\le p\le\ell+1\\0\le q\le\ell}}
\mathcal H_{\ell+1-p,\ell-q}^{(1)}
\otimes\mathcal H_{p,q}^{(n-1)}.
\end{equation*}
Choose an orthonormal basis
\((H_\sigma^{p,q})_{\sigma\in\Sigma_{p,q}}\) in each Hermite space
\(\mathcal H_{p,q}^{(n-1)}\) of the remaining coordinates, and define
\[
a_{p,q,\sigma}(r)
=
\E_{\theta,z'}\!\left[
w(re^{i\theta},z')e^{-i(1-p+q)\theta}
\overline{H_\sigma^{p,q}(z')}
\right],
\]
where \(\theta\) is uniform on \([0,2\pi)\) and \(z'\sim\gamma_{n-1}\).
The first-coordinate angular frequency is \(1-p+q\), independent of
\(\ell\).  Consequently, the same profile \(a_{p,q,\sigma}\) gives the
coefficients for every active level \(\ell\in\mathcal I_{p,q}^{\Lambda}\):
\[
\norm{Q_\ell w}_2^2
=
\sum_{\substack{0\le p\le\ell+1\\0\le q\le\ell}}
\sum_{\sigma\in\Sigma_{p,q}}
\abs*{\int_0^\infty
a_{p,q,\sigma}(r)e_{\ell+1-p,\ell-q}(r)\dd\nu(r)}^2.
\]
For almost every \(r\), the functions
\(e^{i(1-p+q)\theta}H_\sigma^{p,q}(z')\) form an orthonormal family.
Bessel's inequality therefore gives
\(\sum_{p,q,\sigma}|a_{p,q,\sigma}(r)|^2
\le \E_{\theta,z'}|w(re^{i\theta},z')|^2\).
When the right-hand side of the proposition is finite, every such profile
belongs to \(L_2(W\dd\nu)\).
Apply \eqref{eq:residual-bessel} to each profile and sum:
\[
\begin{aligned}
\sum_{\ell\in\Lambda}\beta_\ell\norm{Q_\ell w}_2^2
&\le
\int_0^\infty W(r^2)\sum_{p,q,\sigma}|a_{p,q,\sigma}(r)|^2\dd\nu(r)\\
&\le
\int_{\C^n}W(|z_1|^2)|w(z)|^2\dd\gamma_n(z).
\end{aligned}
\]
The exchanges of nonnegative sums and integrals are justified by Tonelli's
theorem.
\end{proof}

\subsection{The radial quadratic form}

The scalar \(S\) in \eqref{eq:schur-complement} is the squared norm of
the linear functional \(t\mapsto\int_0^\infty rt(r)\dd\nu(r)\) for the
weighted quadratic form below.

\begin{lemma}\label{lem:rank-one-schur}
With \(D,B,R,\boldsymbol b,G,S\) as above,
\begin{equation*}
S=
\sup_{0\ne t\in L_2(D\dd\nu)}
\frac{\displaystyle\abs*{\int_0^\infty rt\dd\nu}^2}
{\displaystyle
 \int_0^\infty D|t|^2\dd\nu+
 \sum_{\ell\in\Lambda}\beta_\ell
 \abs*{\int_0^\infty\phi_\ell t\dd\nu}^2}.
\end{equation*}
In particular, \(S\le1\) is equivalent to
\begin{equation}
\abs*{\int_0^\infty rt\dd\nu}^2
\le
\int_0^\infty D|t|^2\dd\nu
+\sum_{\ell\in\Lambda}\beta_\ell
\abs*{\int_0^\infty\phi_\ell t\dd\nu}^2
\qquad\bigl(t\in L_2(D\dd\nu)\bigr).
\label{eq:one-dimensional-form}
\end{equation}
\end{lemma}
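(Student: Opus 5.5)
The plan is to work in the Hilbert space \(\mathcal H=L_2(D\dd\nu)\) with inner product \(\langle s,t\rangle_D=\int_0^\infty D s\overline t\dd\nu\). Since \(D\ge2\rho>0\) and \(r,\phi_\ell\in L_2(\nu)\), the representers \(u=r/D\) and \(v_\ell=\phi_\ell/D\) lie in \(\mathcal H\), with \(\|u\|_D^2=R\), \(\langle v_\ell,u\rangle_D=b_\ell\) and \(\langle v_\ell,v_m\rangle_D=G_{\ell m}\). The numerator of the Rayleigh quotient is then \(\abs{\langle t,u\rangle_D}^2\). The denominator is \(\langle At,t\rangle_D\) with
\[
A=I+\textstyle\sum_{\ell}\beta_\ell\, v_\ell\langle\cdot,v_\ell\rangle_D=I+VBV^*,
\]
where \(V:\C^\Lambda\to\mathcal H\) maps \(c\) to \(\sum_\ell c_\ell v_\ell\), so that \(V^*V=G\). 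The operator \(A\) is bounded, self-adjoint and satisfies \(A\succeq I\), hence it is invertible.

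The first step is the standard identity \(\sup_{t\ne0}\abs{\langle t,u\rangle}^2/\langle At,t\rangle=\langle A^{-1}u,u\rangle\). For the upper bound, apply Cauchy--Schwarz in the \(A\)-inner product: \(\abs{\langle t,u\rangle}=\abs{\langle At,A^{-1}u\rangle}\le\langle At,t\rangle^{1/2}\langle A^{-1}u,u\rangle^{1/2}\). The value is attained at \(t=A^{-1}u\). If \(u=0\) both sides vanish, though here \(u\ne0\).

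The second step computes \(\langle A^{-1}u,u\rangle\) by the Woodbury identity
\[
(I+VBV^*)^{-1}=I-V(B^{-1}+V^*V)^{-1}V^*.
\]
This identity can be checked directly by multiplying out: the cross terms combine to \(VB[(B^{-1}+G)-I_\Lambda-BG]\)-type expressions that cancel. Only finite matrices need inverting, and \(B^{-1}+G\succ0\) is invertible as the paper notes. Since \(V^*u=(\langle u,v_\ell\rangle_D)_\ell=\boldsymbol b\), this yields \(\langle A^{-1}u,u\rangle=R-\boldsymbol b^T(B^{-1}+G)^{-1}\boldsymbol b=S\). Because all the data are real, no conjugation issues arise. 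The case \(\Lambda=\varnothing\) is immediate, giving \(S=R\).

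The equivalence with \eqref{eq:one-dimensional-form} then follows at once, since \(S\le1\) says exactly that the quotient is at most \(1\) for every \(t\ne0\), and the case \(t=0\) is trivial. The only point needing care is that everything is well defined on \(L_2(D\dd\nu)\). The numerator and the functionals \(\int\phi_\ell t\dd\nu\) are bounded there because \(\int\abs{rt}\dd\nu\le(\int r^2/D\dd\nu)^{1/2}\|t\|_D\). I expect no real obstacle, only this bookkeeping and the Woodbury verification.
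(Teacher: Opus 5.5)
Your proof is correct and is essentially the paper's argument. Your maximizer \(A^{-1}u=(r-\sum_{\ell\in\Lambda}c_\ell\phi_\ell)/D\), with \(\boldsymbol c=(B^{-1}+G)^{-1}\boldsymbol b\), is exactly the paper's \(t_*\). The paper checks \(\mathfrak q(t,t_*)=\int_0^\infty rt\dd\nu\) and \(\mathfrak q(t_*,t_*)=S\) directly, which amounts to applying your Woodbury identity to the single vector \(u\), and then uses the same Cauchy--Schwarz step in the inner product of the quadratic form.
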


\begin{proof}
The denominator is the quadratic form of the inner product
\[
\mathfrak q(t,h)
=
\int_0^\infty Dt\overline h\dd\nu
+\sum_{\ell\in\Lambda}\beta_\ell
\left(\int_0^\infty\phi_\ell t\dd\nu\right)
\overline{\left(\int_0^\infty\phi_\ell h\dd\nu\right)}
\]
on \(L_2(D\dd\nu)\).
All moment functionals are continuous because \(D\ge2\rho>0\).
For \(\Lambda\ne\varnothing\), set
\[
\boldsymbol c=(B^{-1}+G)^{-1}\boldsymbol b,
\qquad
t_*(r)=\frac{r-\sum_{\ell\in\Lambda}c_\ell\phi_\ell(r)}{D(r)}.
\]
Then \(t_*\in L_2(D\dd\nu)\), and
all coefficients \(c_\ell\) are real.  Moreover,
\(\int_0^\infty\phi_\ell t_*\dd\nu
=b_\ell-(G\boldsymbol c)_\ell=c_\ell/\beta_\ell\).
Substitution gives \(\mathfrak q(t,t_*)=\int_0^\infty rt\dd\nu\) and
\(\mathfrak q(t_*,t_*)=R-\boldsymbol b^T\boldsymbol c=S\).
Cauchy--Schwarz now bounds the quotient by \(S\), with equality at \(t=t_*\).
This function is nonzero because \(\phi_0=r\) is orthogonal to all active
\(\phi_\ell\).
For \(\Lambda=\varnothing\), take \(t_*=r/D\); the same calculation gives
\(S=R\).
\end{proof}

\subsection{Proof of the weighted criterion}

\begin{proof}[Proof of Theorem~\ref{thm:weighted-chaos}]
Every function \(h\) in the unit ball of \(L_\infty(\gamma_n)\) is the
average of the two unimodular functions
\(\xi(|h|\pm i\sqrt{1-|h|^2})\), where \(\xi=h/|h|\) for \(h\ne0\)
and \(\xi=1\) otherwise. Applying this representation to each input gives
\(\norm{T_n}_{\infty\to1}
=\sup_{|f|=|g|=1\ \mathrm{a.e.}}|\langle T_nf,g\rangle|\).
Fix such \(f,g\). Multiplying \(g\) by a constant of modulus one, we may
assume that \(\langle T_nf,g\rangle\) is real and nonnegative.
Put \(u=(f+g)/2\) and \(w=(f-g)/2\).
They satisfy \(|u|^2+|w|^2=1\) and \(\RePart(u\overline w)=0\).
Self-adjointness gives
\begin{equation}
\begin{aligned}
\RePart\langle T_nf,g\rangle
={}&
\norm{Q_0u}_2^2-\norm{Q_0w}_2^2
-\rho\norm{\Pi_1u}_2^2+\rho\norm{\Pi_1w}_2^2\\
&-\sum_{\ell\in\Lambda}\beta_\ell\norm{Q_\ell u}_2^2
+\sum_{\ell\in\Lambda}\beta_\ell\norm{Q_\ell w}_2^2.
\end{aligned}
\label{eq:exact-symmetrized-pairing}
\end{equation}
Since
\(\norm{\Pi_1w}_2^2\le1-\norm u_2^2\le1-\norm{\Pi_1u}_2^2\),
discarding \(-\norm{Q_0w}_2^2\) yields
\begin{equation}
\begin{aligned}
\RePart\langle T_nf,g\rangle
\le{}&\rho+\norm{Q_0u}_2^2-2\rho\norm{\Pi_1u}_2^2\\
&-\sum_{\ell\in\Lambda}\beta_\ell\norm{Q_\ell u}_2^2
+\sum_{\ell\in\Lambda}\beta_\ell\norm{Q_\ell w}_2^2.
\end{aligned}
\label{eq:parallelogram-bound}
\end{equation} 

The multiplier and Gaussian measure are invariant under \(U(n)\).
We may therefore rotate the variables so that \(Q_0u(z)=Az_1\), where
\(A=\norm{Q_0u}_2\ge0\).
Apply Proposition~\ref{prop:w-term-bound} in this coordinate.
Using \(|w|^2=1-|u|^2\), we obtain
\[
\sum_{\ell\in\Lambda}\beta_\ell\norm{Q_\ell w}_2^2
\le
\mu_W-\int_{\C^n}W(|z_1|^2)|u(z)|^2\dd\gamma_n(z).
\]
Write \(z=(re^{i\theta},z')\).
For uniform \(\theta\in[0,2\pi)\) and
\(z'\sim\gamma_{n-1}\), set
\(t(r)=\E_{\theta,z'}[e^{-i\theta}u(re^{i\theta},z')]\).
The function \(e^{i\theta}t(r)\) is the orthogonal projection of \(u\) onto
the functions depending only on \(z_1\) and transforming with angular
frequency one.
This subspace is contained in the range of \(\Pi_1\), which gives the
second line of \eqref{eq:t-moments} below.
Also, \(h_{\ell+1,\ell}(z_1)=e^{i\theta}\phi_\ell(r)\) is a unit vector in
the range of \(Q_\ell\).
It follows that
\begin{equation}
\begin{aligned}
\norm{Q_0u}_2^2&=\abs*{\int_0^\infty rt(r)\dd\nu(r)}^2,\\
\norm{\Pi_1u}_2^2&\ge\int_0^\infty|t(r)|^2\dd\nu(r),\\
\norm{Q_\ell u}_2^2&\ge
\abs*{\int_0^\infty\phi_\ell(r)t(r)\dd\nu(r)}^2.
\end{aligned}
\label{eq:t-moments}
\end{equation}
Jensen's inequality, applied to the average defining \(t\), gives
\begin{equation}
\int_{\C^n}W(|z_1|^2)|u(z)|^2\dd\gamma_n(z)
\ge
\int_0^\infty W(r^2)|t(r)|^2\dd\nu(r).
\label{eq:w-jensen}
\end{equation}
Combining these inequalities, with \(D(r)=2\rho+W(r^2)\), gives
\[
\RePart\langle T_nf,g\rangle-(\rho+\mu_W)
\le
\abs*{\int_0^\infty rt\dd\nu}^2
-\int_0^\infty D|t|^2\dd\nu
-\sum_{\ell\in\Lambda}\beta_\ell
\abs*{\int_0^\infty\phi_\ell t\dd\nu}^2
\le0.
\]
Indeed, \(|t|\le1\) and \(\int D\dd\nu=2\rho+\mu_W<\infty\), so
Lemma~\ref{lem:rank-one-schur} applies.
Our choice of phase therefore proves \eqref{eq:scalar-bound}.
Finally, apply \eqref{eq:operator-ratio} to \(F_n\) and use
Proposition~\ref{prop:gaussian-vector-test} as \(n\to\infty\) to obtain
\eqref{eq:weighted-lower-bound}.
\end{proof}

\section{An explicit lower bound}
\label{sec:certificate}

We now give exact parameters satisfying the Gram and Schur conditions of
Theorem~\ref{thm:weighted-chaos}.  Their ratio
\((1-\rho)/(\rho+\mu_W)\) proves Theorem~\ref{thm:main-lower-bound}.

\subsection{Exact parameters}
\label{sec:exact-parameters}

Take \(\Lambda=\Lambda_*:=\{1,\ldots,10\}\),
and interpret every decimal in this section as the corresponding exact
rational number.  The multiplier is determined by
\(\Lambda_*,\rho,(\beta_\ell)_{\ell\in\Lambda_*}\), while \(W\) is the
auxiliary weight used to bound its \(L_\infty\)-to-\(L_1\) norm.

Set \(\rho=0.012039377730497942\).
The coefficients are
\begin{equation*}
\begin{array}{c@{\qquad}l@{\qquad\qquad}c@{\qquad}l}
\ell&\beta_\ell&\ell&\beta_\ell\\ \hline
1&0.4283744445791419&6&0.026165030774068086\\
2&0.1506077793447579&7&0.01945831930582338\\
3&0.08328691492582467&8&0.01490133597851163\\
4&0.05294104124824108&9&0.011682554694871422\\
5&0.03632381026351961&10&0.014696581084273385
\end{array}
\end{equation*}
These parameters define, for every \(n\ge1\), the multiplier
\(T_n=Q_0-\rho\Pi_1-\sum_{\ell=1}^{10}\beta_\ell Q_\ell\).
We use the positive rational function
\begin{equation}
W(s)
=
\alpha+\eta s
+\zeta_1\frac{s}{\delta_1+s}
+\zeta_2\frac{s}{\delta_2+s},
\label{eq:weight-candidate}
\end{equation}
where
\begin{equation*}
\begin{aligned}
\alpha&=0.16212090047056932029265504,
&
\eta&=0.2995274614735440217489782,\\
\zeta_1&=0.1705006229073915451327806,
&
\delta_1&=0.6122395153199937,\\
\zeta_2&=1.3819822296622917537966546,
&
\delta_2&=6.340370329139007.
\end{aligned}
\end{equation*} 

The displayed parameters were obtained by numerical search, as outlined
in Appendix~\ref{app:candidate-discovery}. The verification
below uses the exact rational data above.

\subsection{Verification and consequences}

All coefficients in \eqref{eq:weight-candidate} are positive, so
\(W(s)\ge\alpha>0\) for \(s\ge0\), while
\(W(s)=\eta s+O(1)\) as \(s\to\infty\).  Since the relevant Hermite factors
have polynomial growth and \(\dd\nu=e^{-s}\dd s\) after the substitution
\(s=r^2\), every entry of \(\mathcal M^{p,q}(W)\), as well as
\(\mu_W\), \(R\), \(\boldsymbol b\), and \(G\), is finite.

\begin{proposition}
\label{prop:directed-certificate}
For the exact data in Section~\ref{sec:exact-parameters},
\(I-\mathcal M^{p,q}(W)\succ0\) for every nonempty block
\(\mathcal I_{p,q}^{\Lambda_*}\), and \(S<1\).
The weight mean and ratio satisfy
\begin{align}
\mu_W&<0.716627735168304,
\label{eq:mu-upper}\\
\frac{1-\rho}{\rho+\mu_W}
&>1.35584631827168.
\label{eq:certified-ratio}
\end{align}
\end{proposition}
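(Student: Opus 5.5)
The plan is to reduce every quantity in the proposition to finitely many one-dimensional integrals against \(e^{-s}\dd s\). Each integral will be evaluated in closed form with rigorous enclosures, and the finitely many matrix conditions will then be checked by interval arithmetic.

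\emph{Finitely many blocks.} After the substitution \(s=r^2\), each product of radial factors is a polynomial in \(s\), up to the same parity factor. Concretely, \(e_{a,b}(r)e_{c,d}(r)\) with \(a-b=c-d\) is a polynomial in \(s\) with explicit rational coefficients read off from the definition of \(h_{p,q}\). The index sets \(\mathcal I_{p,q}^{\Lambda_*}\) are nonempty only for \(0\le p\le 11\) and \(0\le q\le 10\). Moreover, the block depends only on which \(\ell\ge\max(p-1,q)\) lie in \(\Lambda_*\), so the nonempty blocks number at most \(12\cdot11\), each of size at most \(10\).

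\emph{Integrals against \(W\) and \(D\).} Two families of integrals are needed. The first consists of moments \(\int_0^\infty s^k e^{-s}/W(s)\dd s\), needed for \(\mathcal M^{p,q}\). The second consists of \(\int_0^\infty s^k e^{-s}/(2\rho+W(s))\dd s\), needed for \(R\), \(\boldsymbol b\) and \(G\). Since \(W\) is rational, I clear denominators: \(1/W(s)=(\delta_1+s)(\delta_2+s)/P(s)\), where \(P\) is a cubic with positive coefficients, and similarly for \(D\).

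\begin{itemize}
\item Partial fractions over the three roots of \(P\) (one real negative root and possibly a complex pair, or three negative reals) reduce each moment to a polynomial part plus terms \(\int_0^\infty e^{-s}/(s+c)\dd s=e^{c}E_1(c)\). For complex \(c\), the analytic continuation is used.
\item I would enclose the roots by interval Newton, and enclose \(E_1\) by its convergent series or continued fraction with tail bounds.
\item Alternatively, and perhaps more robustly, I would use validated quadrature. Split \([0,L]\) into subintervals with a Taylor-model error bound, and bound the tail \([L,\infty)\) directly using \(W\ge\alpha\).
\end{itemize}

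The mean is elementary: \(\mu_W=\alpha+\eta+\zeta_1(1-\delta_1e^{\delta_1}E_1(\delta_1))+\zeta_2(1-\delta_2e^{\delta_2}E_1(\delta_2))\). An interval enclosure of this gives \eqref{eq:mu-upper}, and \eqref{eq:certified-ratio} follows by interval division using the exact rational \(\rho\).

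\emph{Matrix conditions.} With enclosures of all moments in hand, I assemble interval matrices \(I-\mathcal M^{p,q}(W)\) and prove positive definiteness rigorously. I would compute an approximate floating-point Cholesky factor, then bound the residual with interval arithmetic, or equivalently verify that the smallest eigenvalue of a midpoint matrix exceeds the interval radius via Weyl's inequality. For the Schur condition, I form interval \(B^{-1}+G\) and solve \((B^{-1}+G)\boldsymbol c=\boldsymbol b\) by a verified linear solver (Krawczyk or a residual bound). I then enclose \(S=R-\boldsymbol b^T\boldsymbol c\) and check that its upper endpoint is below \(1\).

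\emph{Main obstacle.} I expect the main difficulty to be precision. The search optimized the parameters to near-tightness: \(S\) and some eigenvalues of \(I-\mathcal M^{p,q}\) are presumably within tiny margins, and \eqref{eq:certified-ratio} holds only to about \(10^{-14}\). Meanwhile, high-degree Hermite products up to degree \(21\) produce moment sums with large alternating coefficients and severe cancellation. I would therefore carry out all polynomial expansions in exact rational arithmetic. Only the transcendental constants \(e^{c}E_1(c)\) and the cubic roots would be enclosed, in multiprecision interval arithmetic with a few hundred bits. I would increase the precision until every interval inequality is strict.
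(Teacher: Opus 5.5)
Your proposal is correct and follows the paper's computer-assisted proof essentially step for step. Both clear the denominators of \(W\) and \(2\rho+W\), expand in partial fractions over the roots of the two cubics (the paper shows these are real, simple and negative, so no complex continuation is needed), reduce every entry to moments \(J_k(\lambda)=\int_0^\infty s^ke^{-s}(s+\lambda)^{-1}\dd s\) generated from \(e^\lambda E_1(\lambda)\), use the same closed form for \(\mu_W\), and certify the \(132\) blocks and \(B^{-1}+G\) in high-precision ball arithmetic. The remaining differences are cosmetic: the paper certifies definiteness by interval \(LDL^*\) pivots (smallest lower endpoint above \(1.2\times10^{-5}\), with \(1-S>1.8\times10^{-9}\)) instead of a Cholesky-residual or Weyl argument, and it checks the ratio by an exact rational inequality using the rounded upper bound for \(\mu_W\).
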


\begin{proof}[Computer-assisted proof]
Appendix~\ref{app:verification} gives the code, exact inputs, and details
of the interval calculation.
After the substitution \(s=r^2\), the required integrals reduce to finite
linear combinations of exponential-integral moments, evaluated with outward
rounding from the exact rational inputs.

For each matrix \(I-\mathcal M^{p,q}(W)\), interval arithmetic encloses the
pivots of its exact \(LDL^*\) decomposition in a fixed elimination order.
Let \(\underline d_{p,q,j}\) denote the resulting lower endpoint for its
\(j\)-th pivot.  The calculation gives
\begin{equation}
\min_{p,q,j}\underline d_{p,q,j}
>
1.2259567912\times10^{-5}.
\label{eq:residual-pivot-bound}
\end{equation}
Positivity of these exact pivots proves
\(I-\mathcal M^{p,q}(W)\succ0\) by the \(LDL^*\) criterion.
The bound in \eqref{eq:residual-pivot-bound} is a pivot bound for the fixed
elimination order, not an eigenvalue bound.

The interval enclosure of \(S\) gives
\(1-S>1.8374949151\times10^{-9}\),
and hence \(S<1\).  The same outward-rounded calculation encloses \(\mu_W\)
and \((1-\rho)/(\rho+\mu_W)\), yielding \eqref{eq:mu-upper} and
\eqref{eq:certified-ratio}.
\end{proof}

\begin{proof}[Proof of Theorem~\ref{thm:main-lower-bound}]
Proposition~\ref{prop:directed-certificate} and
Theorem~\ref{thm:weighted-chaos} give
\(\KG>1.35584631827168\).
\end{proof}

\section{A dual bound for the weighted criterion}
\label{sec:parameter-optimization}

We now ask whether additional Hermite levels or a more flexible radial weight
can substantially improve the preceding bound.  For the sufficient conditions
of Theorem~\ref{thm:weighted-chaos}, a dual pair gives a uniform upper bound
for \((1-\rho)/(\rho+\mu_W)\) over all admissible parameter choices.

\subsection{The restricted optimum}

\begin{definition}[Restricted optimization value]\label{def:restricted-optimum}
Let \(\Kstar\) be the supremum of \((1-\rho)/(\rho+\mu_W)\)
over every finite set \(\Lambda\subset\mathbb N_{\ge1}\), including the
empty set, every \(0<\rho<1\), every family
\((\beta_\ell)_{\ell\in\Lambda}\) of positive coefficients, and every
measurable positive radial
weight \(W\) with \(\mu_W<\infty\) and finite entries in the Gram matrices,
such that
\(\mathcal M^{p,q}(W)\preceq I_{\lvert\mathcal I_{p,q}^{\Lambda}\rvert}\)
for every nonempty block \(\mathcal I_{p,q}^{\Lambda}\), and \(S\le 1\).
\end{definition}

By Theorem~\ref{thm:weighted-chaos}, \(\Kstar\le\KG\).
The optimization includes every finite active set and every admissible
positive weight, rather than only the choices used in
Section~\ref{sec:certificate}.

\begin{theorem}[Dual bound]
\label{thm:restricted-dual-bound}
The restricted optimization value satisfies
\[
1.35584631827168
<
\Kstar
<
1.35584697425050.
\]
\end{theorem}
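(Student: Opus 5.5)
The lower bound is immediate: the exact data of Section~\ref{sec:exact-parameters} are admissible in Definition~\ref{def:restricted-optimum} by Proposition~\ref{prop:directed-certificate}. Hence \(\Kstar\ge(1-\rho)/(\rho+\mu_W)>1.35584631827168\), and the inequality is strict because the certified ratio itself exceeds this decimal. The substance is the upper bound, which I would prove by weak duality. The plan is to exhibit explicit dual multipliers that turn any admissible \((\Lambda,\rho,\beta,W)\) into a pointwise inequality \(\rho+\mu_W\ge(1-\rho)/K_{\mathrm{up}}\) with \(K_{\mathrm{up}}=1.35584697425050\).

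First I would rewrite the constraints in a form that is convex in \(W\) for fixed \((\rho,\beta)\). By Lemma~\ref{lem:rank-one-schur}, \(S\le1\) says that \eqref{eq:one-dimensional-form} holds for every \(t\). By the proof of Proposition~\ref{prop:w-term-bound}, \(\mathcal M^{p,q}(W)\preceq I\) is equivalent to the Bessel inequality \eqref{eq:residual-bessel} for every profile \(a\). Each is a family of inequalities linear in \(W\). So testing with finitely many fixed functions \(t_k\) and profiles \(a_{p,q,k}\), chosen close to the numerically optimal extremizers \(t_*\) and the top eigenvectors, gives necessary conditions of the form
\[
\int W\,\Phi\dd\nu\ \ge\ \Psi(\rho,\beta),
\]
where \(\Phi\ge0\) is an explicit combination of \(|t_k|^2\) and \(|a_{p,q,k}|^2\), and \(\Psi\) collects the moment terms. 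Choosing nonnegative multipliers, I would arrange that \(\sum\lambda_k\Phi_k\le1\) pointwise in \(r\). Then
\[
\mu_W\ \ge\ \int W\Bigl(\sum_k\lambda_k\Phi_k\Bigr)\dd\nu\ \ge\ \sum_k\lambda_k\Psi_k(\rho,\beta),
\]
which removes \(W\) entirely.

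The difficulty is that the set \(\Lambda\) and the coefficients \(\beta_\ell\) are arbitrary, with no bound on the levels. To handle this, I would pick the test functions so that the contribution \(\beta_\ell\bigl(\ldots\bigr)\) of each level enters \(\Psi\) with a coefficient that is nonpositive, or else is controlled by \(\beta_\ell\) times a term appearing with the opposite sign in a residual constraint. The resulting lower bound then reduces to a minimum over \(\beta_\ell\ge0\) of a function that separates level by level, and each level can be bounded individually. Levels \(\ell>L\) for a finite cutoff \(L\) would be dealt with by an asymptotic estimate on the overlaps \(\int r\phi_\ell t_k\dd\nu\), which decay in \(\ell\) for smooth \(t_k\), showing they can only help marginally. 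That leaves a one-variable inequality in \(\rho\) together with finitely many \(\beta_\ell\)'s. I would finish by interval arithmetic over a subdivision of \(\rho\in(0,1)\); for \(\rho\) away from about \(0.012\), the crude bound \((1-\rho)/(\rho+\mu_W)\) with a simple lower bound on \(\mu_W\) already suffices.

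I expect the main obstacle to be making the dual uniform over the unbounded family of active sets while keeping a gap of only \(6.6\times10^{-7}\). The pointwise condition \(\sum\lambda_k\Phi_k\le1\) has to hold on all of \([0,\infty)\), which I would certify by interval subdivision on a compact range plus explicit tail asymptotics. The tail over levels must also be bounded analytically, and sharply enough not to consume the margin. My first attempt would be to take the dual measure supported on the numerically computed optimal \(t_*\), and on the single rank-one eigenvector of each block. I would then add further test functions only if the certified gap is insufficient.
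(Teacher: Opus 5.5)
Your lower bound is correct and matches the paper. Weak duality is also the right framework for the upper bound; the paper's argument is a special instance of your scheme. The gap is at the step you call the main obstacle, and the mechanism you sketch there does not close it. After combining your tests you obtain
\[
\mu_W\ \ge\ c_0(\rho)+\sum_{\ell\in\Lambda}c_\ell\beta_\ell,
\qquad
c_\ell=\sum_{\text{Gram tests}}\lambda_a\abs*{\int_0^\infty a\,e_{\ell+1-p,\ell-q}\dd\nu}^2
-\sum_{k}\lambda_k\abs*{\int_0^\infty\phi_\ell t_k\dd\nu}^2,
\]
where the first sum runs over your Gram tests in blocks \((p,q)\) containing \(\ell\).

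Since \(\Lambda\) is an arbitrary finite subset of \(\mathbb N_{\ge1}\) and the \(\beta_\ell>0\) are free in this inequality, you need \(c_\ell\ge0\) for \emph{every} \(\ell\ge1\). A nonpositive coefficient, as you wrote, is the harmful case: the level-by-level minimum over \(\beta_\ell\ge0\) is \(-\infty\) as soon as one \(c_\ell\) is negative. Decay of the overlaps for \(\ell>L\) helps only if you add compensating Gram tests for all those levels. An example is \(a\equiv1\) in block \((\ell+1,\ell)\), which encodes \(\beta_\ell\le\mu_W\). Their total cost in the pointwise budget must fit inside a margin of at most \(6.6\times10^{-7}\), which requires an analytic tail estimate you have not supplied, and infinitely many sign conditions remain to verify. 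Tests extracted from the numerical optimum for \(\Lambda_*=\{1,\dots,10\}\) give no control of \(c_\ell\) for \(\ell\ge11\). For \(\ell\le10\) they make \(c_\ell\) vanish only approximately, with no sign guarantee.

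The missing idea is to make every \(\beta_\ell\)-term cancel \emph{exactly}. The paper uses only the Schur inequality with one test \(Y\) and the Gram inequality of the single block \((2,1)\) with one profile \(X\). This block contains every active level, because \(\mathcal I_{2,1}^{\Lambda}=\Lambda\), and its radial factors are \(\chi_\ell=e_{\ell-1,\ell-1}\). Both \((\chi_\ell)_{\ell\ge1}\) and \((\widehat\phi_\ell)_{\ell\ge0}\) are orthonormal bases of \(L_2(e^{-s}\dd s)\). You may therefore prescribe \(\langle\widehat\phi_0,Y\rangle=1\) and \(\langle\chi_\ell,X\rangle=\langle\widehat\phi_\ell,Y\rangle\) for all \(\ell\ge1\). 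The two \(\beta\)-sums then coincide for every \(\Lambda\) and every \(\beta\), leaving \(1\le2\rho\norm{Y}_2^2+\int W(X^2+Y^2)\dd\widehat\nu\).

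Parseval gives \(\norm X_2^2=\norm Y_2^2-1\). Hence a pointwise bound \(X^2+Y^2\le C\) yields \(2\norm Y_2^2\le C+1\), and so \(1\le(C+1)\rho+C\mu_W\) for all \(\rho\) at once. No subdivision in \(\rho\) and no separate crude bound away from \(0.012\) is needed. Taking the common coefficients to be alternating mixtures of geometric sequences gives closed forms for \(X\) and \(Y\). The whole certificate then reduces to one interval-verified inequality, \(\sup_{s\ge0}(X^2+Y^2)<1.35584697425050\).
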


The lower inequality follows from Section~\ref{sec:certificate}.
The upper inequality leaves less than \(6.56\times10^{-7}\) for improvement
within this optimization problem.

\subsection{The dual argument}

We combine \(S\le1\) with \(\mathcal M^{2,1}(W)\preceq I\).
The block \((2,1)\) contains every active index, since
\(\mathcal I_{2,1}^{\Lambda}=\Lambda\).
We choose tests with matching coefficients in the radial bases of these two
inequalities.  This will eliminate the terms containing \(\beta_\ell\),
leaving a bound involving only \(\rho\) and \(\mu_W\).

Set \(s=r^2\) and \(\dd\widehat\nu(s)=e^{-s}\dd s\), and write
\(\mathcal H_{\mathrm{rad}}=L_2([0,\infty),\widehat\nu)\).
Let \(L_j^{(\alpha)}\) denote the generalized Laguerre polynomials.
The condition \(S\le1\) and the block inequality
\(\mathcal M^{2,1}(W)\preceq I\) use, respectively, the functions
\begin{align*}
\widehat\phi_\ell(s)
&=\phi_\ell(\sqrt s)
=\frac{(-1)^\ell\sqrt s}{\sqrt{\ell+1}}L_\ell^{(1)}(s),
&&\ell\ge0,\\
\chi_\ell(s)&=e_{\ell-1,\ell-1}(\sqrt s)
=(-1)^{\ell-1}L_{\ell-1}^{(0)}(s),
&&\ell\ge1.
\end{align*}
Both families are orthonormal bases of \(\mathcal H_{\mathrm{rad}}\): for
\((\chi_\ell)_{\ell\ge1}\), by standard Laguerre orthogonality; for
\((\widehat\phi_\ell)_{\ell\ge0}\), because multiplication by \(\sqrt{s}\)
carries the orthonormal basis
\(\bigl((-1)^\ell L_\ell^{(1)}/\sqrt{\ell+1}\bigr)_{\ell\ge0}\) of
\(L_2([0,\infty),s e^{-s}\dd s)\) onto the displayed family; see
\cite[Sections~5.1 and~5.7]{Szego1975}.

\begin{lemma}[Dual upper bound]\label{lem:dual-upper-bound}
Suppose real functions \(X,Y\in\mathcal H_{\mathrm{rad}}\) satisfy
\begin{equation}
\langle\widehat\phi_0,Y\rangle=1,
\qquad
\langle\chi_\ell,X\rangle
=
\langle\widehat\phi_\ell,Y\rangle
\quad(\ell\ge1),
\label{eq:coefficient-matching}
\end{equation}
and \(X(s)^2+Y(s)^2\le C\) for almost every \(s\ge0\).
Then \(\Kstar\le C\).
\end{lemma}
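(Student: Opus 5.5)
The plan is to fix an arbitrary admissible tuple \((\Lambda,\rho,(\beta_\ell),W)\) from Definition~\ref{def:restricted-optimum} and show \(1-\rho\le C(\rho+\mu_W)\). Taking the supremum then gives \(\Kstar\le C\). Only two of the admissibility conditions are needed:
\begin{itemize}
\item the Schur condition \(S\le1\), in the equivalent form \eqref{eq:one-dimensional-form} given by Lemma~\ref{lem:rank-one-schur};
\item the single block \(\mathcal M^{2,1}(W)\preceq I\), in its Bessel form \eqref{eq:residual-bessel}.
\end{itemize}
For \((p,q)=(2,1)\) we have \(\mathcal I_{2,1}^\Lambda=\Lambda\) (nonempty when \(\Lambda\neq\varnothing\)), and the radial factors are \(e_{\ell-1,\ell-1}(r)=\chi_\ell(r^2)\).

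\emph{Step 1: plug in the tests.} After the substitution \(s=r^2\), \(\dd\nu\) becomes \(\dd\widehat\nu\). Take \(t(r)=Y(r^2)\) in \eqref{eq:one-dimensional-form} and \(a(r)=X(r^2)\) in \eqref{eq:residual-bessel}.

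First check admissibility of these tests. Since \(X^2+Y^2\le C\), we get
\[
\int D\,Y^2\dd\widehat\nu\le 2\rho\norm{Y}^2+C\mu_W<\infty,
\]
so \(t\in L_2(D\dd\nu)\), and likewise \(a\in L_2(W\dd\nu)\).

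Next, the moments. By \(\widehat\phi_0(s)=\sqrt s\) and \eqref{eq:coefficient-matching},
\[
\int r t\dd\nu=\langle\widehat\phi_0,Y\rangle=1,
\qquad
\int\phi_\ell t\dd\nu=\langle\widehat\phi_\ell,Y\rangle=\langle\chi_\ell,X\rangle=\int a\,e_{\ell-1,\ell-1}\dd\nu
\quad(\ell\in\Lambda).
\]
Hence the \(\beta_\ell\)-sum in \eqref{eq:one-dimensional-form} equals the left side of \eqref{eq:residual-bessel}, and so is at most \(\int W X^2\dd\widehat\nu\). This is the cancellation of all \(\beta_\ell\)-dependence. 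Combining the two inequalities yields
\[
1\le 2\rho\norm{Y}^2+\int W\,(X^2+Y^2)\dd\widehat\nu\le 2\rho\norm{Y}^2+C\mu_W.
\]
When \(\Lambda=\varnothing\) the \(\beta\)-sum is absent, and the same bound follows directly.

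\emph{Step 2: control \(\norm Y^2\).} This is the only non-mechanical point. The crude bound \(\norm Y^2\le C\) would lose the factor needed. Instead, use the two orthonormal bases.

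Expanding \(Y\) in \((\widehat\phi_\ell)_{\ell\ge0}\) gives \(\norm Y^2=1+\sum_{\ell\ge1}\langle\widehat\phi_\ell,Y\rangle^2\). By the matching \eqref{eq:coefficient-matching} and Parseval for the basis \((\chi_\ell)_{\ell\ge1}\), this sum equals \(\norm X^2\). Hence \(\norm X^2=\norm Y^2-1\).

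Integrating \(X^2+Y^2\le C\) against the probability measure \(\widehat\nu\) gives \(2\norm Y^2-1\le C\), that is, \(2\norm Y^2\le 1+C\).

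\emph{Step 3: conclude.} Substituting into Step 1 gives
\[
1\le\rho(1+C)+C\mu_W,
\]
that is, \(1-\rho\le C(\rho+\mu_W)\). Since \(\rho+\mu_W>0\), this is \((1-\rho)/(\rho+\mu_W)\le C\), and taking the supremum over admissible tuples gives \(\Kstar\le C\).

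The main care points are verifying that the tests lie in the required weighted spaces, which follows from \(\mu_W<\infty\) and \(X^2+Y^2\le C\), and remembering to use the completeness of both Laguerre-type bases in Step 2.
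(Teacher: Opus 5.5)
Your proposal is correct and follows the paper's proof essentially step for step. You test the Schur inequality \eqref{eq:one-dimensional-form} with $Y$ and the $(2,1)$ Bessel inequality \eqref{eq:residual-bessel} with $X$, so that the $\beta_\ell$-sums coincide, and then use Parseval in both Laguerre bases to get $\|X\|^2=\|Y\|^2-1$ and hence $2\|Y\|^2\le 1+C$, which is exactly the paper's route.
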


\begin{proof}
Fix any admissible parameter choice from
Definition~\ref{def:restricted-optimum}.
The weighted Bessel inequality \eqref{eq:residual-bessel} for \((2,1)\) gives
\begin{equation*}
\sum_{\ell\in\Lambda}\beta_\ell
\abs{\langle f,\chi_\ell\rangle}^2
\le
\int_0^\infty W(s)\abs{f(s)}^2\dd\widehat\nu(s),
\end{equation*}
while \(S\le1\), in the form \eqref{eq:one-dimensional-form}, gives
\begin{equation*}
\abs{\langle h,\widehat\phi_0\rangle}^2
\le
2\rho\norm h_2^2
+\int_0^\infty W(s)\abs{h(s)}^2\dd\widehat\nu(s)
+\sum_{\ell\in\Lambda}\beta_\ell
\abs{\langle h,\widehat\phi_\ell\rangle}^2.
\end{equation*}
Both hold for functions in \(\mathcal H_{\mathrm{rad}}\) with finite
weighted squared norm.  They apply to \(f=X\) and \(h=Y\), since
\(\int W(X^2+Y^2)\dd\widehat\nu
\le C\int W\dd\widehat\nu=C\mu_W<\infty\).
The matching coefficients make the sums in these two inequalities equal.
Substitution therefore eliminates every \(\beta_\ell\):
\begin{equation}
\begin{aligned}
1
&\le
2\rho\norm{Y}_2^2
+\int W(X^2+Y^2)\dd\widehat\nu\\
&\le
2\rho\norm{Y}_2^2+C\mu_W.
\end{aligned}
\label{eq:dual-first-estimate}
\end{equation}
Parseval and \eqref{eq:coefficient-matching} yield
\(\norm{X}_2^2=\norm{Y}_2^2-1\).
Integrating the pointwise bound against the probability measure
\(\widehat\nu\) therefore gives
\(2\norm{Y}_2^2-1=\norm{X}_2^2+\norm{Y}_2^2\le C\).
Substitution in \eqref{eq:dual-first-estimate} yields
\(1\le(C+1)\rho+C\mu_W\), or equivalently
\((1-\rho)/(\rho+\mu_W)\le C\).
Taking the supremum over all admissible parameter choices proves the claim.
\end{proof}

\subsection{Dual pairs from finitely many parameters}

We enforce coefficient matching by using a common sequence in the two
Laguerre expansions. We choose an alternating coefficient sequence whose
magnitudes are positive mixtures of geometric sequences; this gives
explicit formulas for the resulting functions.

\begin{lemma}[Geometric mixtures]
\label{lem:atomic-matching-pair}
Let \(N\ge1\) be an integer, and choose numbers \(q_1,\ldots,q_N\) and
\(c_1,\ldots,c_N\) satisfying \(c_j>0\) and \(0<q_j<1\)
for \(1\le j\le N\), and \(\sum_{j=1}^N c_j=1\).
Define \(a_\ell=(-1)^\ell\sum_{j=1}^N c_jq_j^\ell\) for \(\ell\ge0\)
and
\begin{equation}
Y=\sum_{\ell=0}^\infty a_\ell\widehat\phi_\ell,
\qquad
X=\sum_{\ell=1}^\infty a_\ell\chi_\ell.
\label{eq:dual-expansions}
\end{equation}
\begin{samepage}
Then \(X,Y\in\mathcal H_{\mathrm{rad}}\) satisfy
\eqref{eq:coefficient-matching}. They admit the pointwise representations
\begin{align}
X(s)
&=
-\sum_{j=1}^N
c_j\frac{q_j}{1-q_j}
\exp\left(-s\frac{q_j}{1-q_j}\right),
\label{eq:X-formula}\\
Y(s)
&=
\frac{2\sqrt s}{\sqrt\pi}
\sum_{j=1}^N c_j
\int_0^\infty
\frac{e^{-v^2}}{(1-q_je^{-v^2})^2}
\times
\exp\left(
-s\frac{q_je^{-v^2}}{1-q_je^{-v^2}}
\right)
\dd v.
\label{eq:Y-formula}
\end{align}
\end{samepage}
\end{lemma}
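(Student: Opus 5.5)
The plan is to check membership and coefficient matching directly from the coefficient sequence, and then to sum the two Laguerre series in closed form using the classical generating function
\[
\sum_{k\ge0}L_k^{(\alpha)}(s)\,t^k=(1-t)^{-\alpha-1}\exp\Bigl(-\frac{st}{1-t}\Bigr),\qquad \abs{t}<1,
\]
from \cite{Szego1975}.

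\emph{Membership and matching.} Let \(q_*=\max_j q_j<1\). Then \(\abs{a_\ell}\le q_*^\ell\), so \((a_\ell)\) is square summable. Since \((\widehat\phi_\ell)_{\ell\ge0}\) and \((\chi_\ell)_{\ell\ge1}\) are orthonormal bases of \(\mathcal H_{\mathrm{rad}}\), both series in \eqref{eq:dual-expansions} converge in \(\mathcal H_{\mathrm{rad}}\), and their coefficients are read off by orthonormality. This gives \(\langle\widehat\phi_0,Y\rangle=a_0=\sum_jc_j=1\) and \(\langle\chi_\ell,X\rangle=a_\ell=\langle\widehat\phi_\ell,Y\rangle\) for \(\ell\ge1\), which is \eqref{eq:coefficient-matching}. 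Both functions are real because every coefficient and every basis function is real.

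\emph{Formula for \(X\).} The signs cancel:
\[
a_\ell\chi_\ell=(-1)^\ell(-1)^{\ell-1}\sum_j c_jq_j^\ell L_{\ell-1}^{(0)}=-\sum_jc_jq_j\,q_j^{\ell-1}L_{\ell-1}^{(0)}.
\]
Summing over \(\ell\ge1\) and applying the generating function with \(\alpha=0\) and \(t=q_j\) gives \eqref{eq:X-formula} directly.

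\emph{Formula for \(Y\).} Here the signs also cancel, leaving
\[
a_\ell\widehat\phi_\ell=\sqrt s\sum_jc_jq_j^\ell\,\frac{L_\ell^{(1)}(s)}{\sqrt{\ell+1}}.
\]
The factor \((\ell+1)^{-1/2}\) blocks a direct use of the generating function. I will remove it with the Gaussian integral
\[
(\ell+1)^{-1/2}=\frac{2}{\sqrt\pi}\int_0^\infty e^{-(\ell+1)v^2}\dd v.
\]
After interchanging sum and integral, the inner series is \(e^{-v^2}\sum_\ell(q_je^{-v^2})^\ell L_\ell^{(1)}(s)\). The generating function with \(\alpha=1\) and \(t=q_je^{-v^2}<1\) sums it to exactly the integrand in \eqref{eq:Y-formula}.

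\emph{Main obstacle: justification.} Two points need care.
\begin{itemize}
\item The pointwise sums must agree almost everywhere with the \(L_2\) limits defining \(X\) and \(Y\).
\item The interchange of \(\sum_\ell\) and \(\int\dd v\) must be legitimate.
\end{itemize}
For both I will use the classical bound \(\abs{L_k^{(\alpha)}(s)}\le\binom{k+\alpha}{k}e^{s/2}\) for \(\alpha\ge0\) and \(s\ge0\). For fixed \(s\) it gives
\[
\sum_\ell q_*^\ell(\ell+1)e^{s/2}\int_0^\infty e^{-(\ell+1)v^2}\dd v<\infty .
\]
Hence the double sum–integral converges absolutely, and Fubini–Tonelli justifies the interchange. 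The same bound shows that the partial sums of both series converge pointwise for every \(s\). They also converge in \(L_2\), and some subsequence of the partial sums converges almost everywhere to the \(L_2\) limit. Therefore the pointwise sums coincide with \(X\) and \(Y\) almost everywhere, and this yields the representations \eqref{eq:X-formula} and \eqref{eq:Y-formula}.
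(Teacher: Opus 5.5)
Your proposal is correct and follows essentially the same route as the paper. It uses square-summability of $(a_\ell)$ with orthonormality for membership and matching, the Laguerre generating function for $X$, and the Gaussian integral for $(\ell+1)^{-1/2}$ followed by the $L^{(1)}_\ell$ generating function for $Y$. Your explicit bound $\abs{L_k^{(\alpha)}(s)}\le\binom{k+\alpha}{k}e^{s/2}$ and the a.e.\ subsequence argument simply make precise the interchange and identification steps, which the paper handles by a brief appeal to absolute convergence and dominated convergence.
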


\begin{proof}
Let \(q_* = \max_j q_j<1\).  Since
\(\abs{a_\ell}\le q_*^\ell\), both coefficient sequences in
\eqref{eq:dual-expansions} are square summable.  The orthonormality of the
two Laguerre families therefore proves that these series define
\(X,Y\in\mathcal H_{\mathrm{rad}}\).  It also gives
\(\langle\widehat\phi_0,Y\rangle=a_0=1\) and, for every \(\ell\ge1\),
\(\langle\chi_\ell,X\rangle
=\langle\widehat\phi_\ell,Y\rangle=a_\ell\).

Equation \eqref{eq:X-formula} follows from the Laguerre generating function.
For \eqref{eq:Y-formula}, insert
\(1/\sqrt{\ell+1}=(2/\sqrt\pi)\int_0^\infty e^{-(\ell+1)v^2}\dd v\)
into \eqref{eq:dual-expansions} and use the generating function for
\(L_\ell^{(1)}\).  Since
\(q_je^{-v^2}\le q_*<1\), the resulting generating series are absolutely
and locally uniformly convergent in \(s\); dominated convergence therefore
justifies the sum and integral interchanges.
\end{proof}

\begin{samepage}
\begin{proposition}
\label{prop:bounded-dual-pair}
There exist \(180\) pairs of rational numbers \((c_j,q_j)\), represented
by terminating decimals, with
\(c_j>0\), \(0<q_j<1\) for \(1\le j\le180\), and
\(\sum_{j=1}^{180}c_j=1\),
such that the corresponding pair \((X,Y)\) from
Lemma~\ref{lem:atomic-matching-pair} satisfies
\eqref{eq:coefficient-matching} and
\begin{equation}
\sup_{s\ge0}\bigl(X(s)^2+Y(s)^2\bigr)
<1.35584697425050.
\label{eq:dual-cover-bound}
\end{equation}
\end{proposition}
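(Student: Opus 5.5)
This is a computer-assisted existence statement, so the plan has two parts: find the 180 pairs \((c_j,q_j)\) numerically, then certify the supremum bound \eqref{eq:dual-cover-bound} rigorously with interval arithmetic. The coefficient matching \eqref{eq:coefficient-matching} needs no work, since Lemma~\ref{lem:atomic-matching-pair} gives it for any admissible \((c_j,q_j)\).

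\emph{Finding the parameters.} We fix a grid of ratios \(q_j\in(0,1)\), clustered near \(1\) because slowly decaying coefficients are needed to make \(Y\) large only where \(X\) is small. By \eqref{eq:X-formula} and \eqref{eq:Y-formula}, both \(X(s)\) and \(Y(s)\) are linear in the vector \(c=(c_j)\). Hence \(X(s)^2+Y(s)^2\) is a convex quadratic in \(c\) at each fixed \(s\). Minimizing \(\max_s(X^2+Y^2)\) over the simplex \(c_j\ge0\), \(\sum c_j=1\), on a fine \(s\)-grid is therefore a convex second-order cone program. We then polish the \(q_j\) by local optimization, drop atoms with negligible weight, and round everything to terminating decimals. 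After rounding we renormalize so that \(\sum c_j=1\) holds exactly in rational arithmetic.

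\emph{Certification.} For the rigorous bound we split \([0,\infty)\) into a compact range \([0,s_{\max}]\) and a tail.

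\begin{itemize}
\item \textbf{Tail.} Each term of \(X\) decays like \(e^{-s\,q_j/(1-q_j)}\). For \(Y\), split the \(v\)-integral at a cutoff \(V\):
  \begin{itemize}
  \item for \(v\le V\), the exponent satisfies \(q_je^{-v^2}/(1-q_je^{-v^2})\ge q_je^{-V^2}/(1-q_je^{-V^2})\), which gives exponential decay in \(s\);
  \item for \(v>V\), the integrand is at most \(e^{-v^2}/(1-q_*)^2\), which gives a Gaussian-tail bound.
  \end{itemize}
  Together these yield \(Y(s)^2\le C_1 s\,e^{-\kappa s}+C_2 s\,e^{-V^2}\). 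Choosing \(V\) depending on \(s\) (say \(V^2\approx\tfrac12\log s\)) makes \(Y(s)\to0\), and an explicit threshold \(s_{\max}\) beyond which \(X^2+Y^2\) is well below \(1\) follows.
\item \textbf{Compact range.} Subdivide \([0,s_{\max}]\) into intervals \(J\). On each \(J\), enclose \(X(J)\) directly by interval evaluation of the exponentials. Enclose \(Y(J)\) by splitting the \(v\)-integral into subintervals plus a tail. On each \(v\)-cell, the integrand is monotone in \(s\) and bounded above by its value at the cell endpoints, because it is a product of monotone factors. This gives rigorous upper and lower Riemann sums. Since \(Y\ge0\) and \(X\le0\), upper enclosures of \(|X|\) and \(Y\) suffice.
\item \textbf{Refinement.} Refine \(J\) adaptively until the upper endpoint of \(X^2+Y^2\) on every cell is below \(1.35584697425050\).
\end{itemize}

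\emph{Main obstacle.} The difficulty is that the margin is tiny: the optimized \(\max_s(X^2+Y^2)\) is nearly equioscillating, touching its maximum at many points. The enclosure of the integral defining \(Y\) must therefore be accurate to roughly \(10^{-12}\) relative error near those peaks. I would handle this in two ways:
\begin{itemize}
\item Replace crude Riemann bounds in \(v\) by a validated quadrature, such as Gauss--Legendre with an interval remainder term from derivative bounds, or Taylor models in \(v\).
\item Use first-order Taylor enclosures in \(s\) (value plus interval derivative times radius) rather than naive interval evaluation. This avoids dependency blowup across the 180 terms, so that the number of \(s\)-cells stays manageable.
\end{itemize}
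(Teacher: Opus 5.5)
Your plan follows the paper's proof in outline. You fix the rates, compute the weights by a second-order cone program over the simplex on sampled values of $s$, take coefficient matching from Lemma~\ref{lem:atomic-matching-pair}, and certify the pointwise bound by interval arithmetic on a bounded range plus an analytic tail. The paper uses $180$ logarithmically spaced rates $q_j/(1-q_j)\in[10^{-3},10^2]$ and $1801$ sample points, and does not polish the $q_j$.

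The certification machinery differs. The paper writes $Y=\sqrt s\,H$ with $H$ entire and bounds the smooth function $X^2+sH^2$ directly. Since $(-1)^kH^{(k)}$ and $|X^{(k)}|$ are nonnegative and nonincreasing, derivative enclosures at a cell's left endpoint bound the derivatives on the whole cell. Degree-seven Taylor polynomials in Bernstein form, with an eighth-derivative remainder, then cover $[0,10^5]$ in only $98$ cells; the derivatives of $H$ come from Simpson's rule with a validated fourth-derivative remainder. Your low-order adaptive scheme is sound in principle but needs many more cells, and two details matter:
\begin{itemize}
\item A first-order Taylor enclosure of $Y$ itself fails on cells touching $0$, because $Y'(s)\sim H(0)/(2\sqrt s)$ with $H(0)>0$.
\item Enclosing $|X|$ and $Y$ separately before squaring misses the cancellation $XX'+YY'=0$ at the peaks of the near-equioscillating function. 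There $XX'<0$, so the overestimate is first order in the cell width. Apply the mean-value form to $X^2+sH^2$ as a whole.
\end{itemize}

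Your tail estimate is wrong as displayed. From $Y\lesssim\sqrt s\,\bigl(e^{-s\kappa(V)}+e^{-V^2}\bigr)$, the square carries $s\,e^{-2V^2}$, not $s\,e^{-V^2}$. With the latter no $V$ works: since $\kappa(V)\approx q_{\min}e^{-V^2}$, the first term needs $e^{-V^2}\gg 1/s$, while the second needs $e^{-V^2}\ll 1/s$. After the correction the admissible window is $\tfrac12\log s<V^2<\log s$. Your choice $V^2\approx\tfrac12\log s$ sits at its edge and decays only through the Mills-ratio factor $1/V$. Combined with the crude constant $(1-q_*)^{-2}$, about $10^4$ when $q_*\approx 0.99$, this makes $s_{\max}$ astronomically large. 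Use $(1-q_je^{-V^2})^{-2}$ on $v\ge V$ and take $V^2=\tfrac34\log s$. This mirrors the paper's bound $e^{-z}\le(k/e)^kz^{-k}$ with $k=3/4$, which gives $Y(s)=O(s^{-1/4})$ and $Y<0.4886$ for $s\ge10^5$.

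Smaller points:
\begin{itemize}
\item Enforce $\sum_jc_j=1$ by adjusting one weight, as the paper does. Dividing by the sum generally destroys terminating decimals.
\item If you drop atoms, restore the count $180$, for example by splitting an atom into two equal halves.
\item The constant $1.35584697425050$ is the paper's certified bound $1.355846974250492$ rounded up by $8\times10^{-15}$. Your re-optimized parameters must therefore do essentially as well as the paper's. Your enclosures must also be sharp to within whatever margin remains, possibly much finer than the $10^{-12}$ you estimate.
\end{itemize}
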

\end{samepage}

\begin{proof}[Computer-assisted proof]
The \(180\) exact rational pairs \((c_j,q_j)\) specified in
Appendix~\ref{app:verification} satisfy
\(c_j>0\), \(0<q_j<1\), and \(\sum_jc_j=1\), as verified in exact arithmetic.
Lemma~\ref{lem:atomic-matching-pair} then gives the coefficient matching.
For the pointwise estimate \eqref{eq:dual-cover-bound}, write
\(Y(s)=\sqrt{s}H(s)\).  The quantity
\(X(s)^2+sH(s)^2\) is smooth at the origin and can be bounded by Taylor
estimates on \(0\le s\le10^5\).  An analytic decay estimate handles
\(s\ge10^5\).  Appendix~\ref{app:dual-verification} gives the interval
calculation and the remainder bounds.
\end{proof}

\begin{proof}[Proof of Theorem~\ref{thm:restricted-dual-bound}]
The bounds in Section~\ref{sec:certificate} give
\(\Kstar>1.35584631827168\) by \eqref{eq:certified-ratio}.
Proposition~\ref{prop:bounded-dual-pair} and
Lemma~\ref{lem:dual-upper-bound} give \(\Kstar<1.35584697425050\).
\end{proof}

\subsection{Where the criterion loses information}
\label{sec:beyond-certificate}

The upper bound for \(\Kstar\) concerns the weighted criterion, rather than
the original operator problem.
Potential losses occur in four estimates.

\begin{enumerate}
\item In passing from \eqref{eq:exact-symmetrized-pairing} to
\eqref{eq:parallelogram-bound}, we discard
\(-\norm{Q_0w}_2^2\) and use
\(\norm{\Pi_1w}_2^2\le\norm w_2^2
=1-\norm u_2^2\le1-\norm{\Pi_1u}_2^2\).
These inequalities need not be equalities.

\item The positive term
\(\sum_{\ell\in\Lambda}\beta_\ell\norm{Q_\ell w}_2^2\) is bounded by
\(\int W(|z_1|^2)|w|^2\dd\gamma_n\).
This estimate is imposed independently of \(u\), even though \(u,w\)
come from the same unimodular pair and satisfy
\(\RePart(u\overline w)=0\).

\item The projection estimates \eqref{eq:t-moments} use only one radial
component of \(u\). Other components contribute further nonpositive terms
to the upper bound.

\item Jensen's inequality in \eqref{eq:w-jensen} replaces the weighted
energy of \(u\) by that of its radial profile.  The difference is
\[
\int_0^\infty W(r^2)\,
\E_{\theta,z'}\!
\left|e^{-i\theta}u(re^{i\theta},z')-t(r)\right|^2\dd\nu(r).
\]
\end{enumerate}

Requiring the homogeneous radial inequality \eqref{eq:one-dimensional-form}
for every \(t\in L_2(D\dd\nu)\), rather than only for realizable profiles
with \(|t|\le1\), introduces no additional loss in that inequality.
Indeed, every measurable \(t\) with \(|t|\le1\) is realizable.
With \(\xi=t/|t|\) where \(t\ne0\), and \(\xi=1\) otherwise, take
\[
u(re^{i\theta},z')=e^{i\theta}t(r),
\qquad
w(re^{i\theta},z')=ie^{i\theta}\xi(r)\sqrt{1-|t(r)|^2}.
\]
Then \(f=u+w\) and \(g=u-w\) are unimodular and give the prescribed
profile.  A common phase makes its first moment nonnegative without
changing the quadratic inequality.
Homogeneity extends the test to all bounded profiles, and bounded
truncation extends it to \(L_2(D\dd\nu)\), since the moment functionals
are continuous there.

The Gram conditions also limit what can be gained by changing the
vector-valued test. Under the hypotheses of Theorem~\ref{thm:weighted-chaos},
the \((\ell,\ell)\) entry of the Gram condition for
\((p,q)=(\ell+1,\ell)\), together with \(e_{0,0}=1\) and
Cauchy--Schwarz, gives
\[
\beta_\ell
\le\left(\int_0^\infty\frac{\dd\nu(r)}{W(r^2)}\right)^{-1}
\le\mu_W
\qquad(\ell\in\Lambda).
\]
With \(\beta_*\) and \(C_T\) as in
Proposition~\ref{prop:gaussian-vector-test}, this gives
\(\beta_*\le\mu_W\), also when \(\Lambda=\varnothing\).
For any \(n,d\ge1\) and \(F,G\in L_\infty(\gamma_n;\C^d)\) with
\(\norm F_\infty,\norm G_\infty\le1\), we have
\(\norm F_2,\norm G_2\le1\). The spectral bound therefore yields
\[
\frac{\abs{\langle T_nF,G\rangle}}{\rho+\mu_W}
\le\frac{C_T}{\rho+\mu_W}
\le\max\left\{\frac{1-\rho}{\rho+\mu_W},1\right\}
\le\Kstar,
\]
where the last step uses Definition~\ref{def:restricted-optimum} and the lower bound
from Section~\ref{sec:certificate}.
As long as the norm bound \eqref{eq:scalar-bound} is retained, changing the
bounded vector-valued test cannot produce a lower bound larger than
\(\Kstar\).
An improvement beyond \(1.35584697425050\) within the multiplier family
\eqref{eq:multiplier} requires a sharper estimate of
\(\norm{T_n}_{\infty\to1}\).
Joint information about \(u,w\) may lead to a sharper estimate.
Unimodularity alone imposes no further restriction on \(t\), since every
measurable disk-valued radial profile is realizable.

\appendix

\section{Computer-assisted verification}
\label{app:verification}

This appendix gives the interval calculations used in
Propositions~\ref{prop:directed-certificate} and~\ref{prop:bounded-dual-pair}.
The first verifies the Gram inequalities, Schur condition, and lower-bound
ratio; the second verifies the pointwise bound for the matching pair used
to bound \(\Kstar\).

The code and exact inputs are included as ancillary files.
See the code and input files on GitHub at the following paths, relative
to the repository root:
\begin{center}
\begin{tabular}{@{}lll@{}}
Proposition~\ref{prop:directed-certificate} & Code &
\href{https://github.com/shengtaoguo/complex-grothendieck-certificates/blob/main/verification/computations/lower_bound.py}{\nolinkurl{verification/computations/lower_bound.py}} \\
& Input &
\href{https://github.com/shengtaoguo/complex-grothendieck-certificates/blob/main/verification/data/lower_bound.json}{\nolinkurl{verification/data/lower_bound.json}} \\[4pt]
Proposition~\ref{prop:bounded-dual-pair} & Code &
\href{https://github.com/shengtaoguo/complex-grothendieck-certificates/blob/main/verification/computations/dual_bound.py}{\nolinkurl{verification/computations/dual_bound.py}} \\
& Input &
\href{https://github.com/shengtaoguo/complex-grothendieck-certificates/blob/main/verification/data/dual_pair.json}{\nolinkurl{verification/data/dual_pair.json}}.
\end{tabular}
\end{center}
The entry point
\href{https://github.com/shengtaoguo/complex-grothendieck-certificates/blob/main/verification/verify.py}{\texttt{verification/verify.py}}
runs both computations and their checks; the accompanying README gives
the prerequisites and running instructions.

\subsection{The lower-bound computation}
\label{app:candidate-discovery}

The candidate was found numerically by restricting the search to
\[
\Lambda=\{1,\ldots,L\},
\qquad
W(s)=\alpha+\eta s+\sum_{j=1}^{P}\zeta_j\frac{s}{\delta_j+s},
\]
and maximizing \((1-\rho)/(\rho+\mu_W)\) by constrained numerical
optimization, with Gauss--Laguerre quadrature for the integrals.
The final choice is \(L=10\) and \(P=2\).
We slightly increased the amplitudes in \(W\), keeping \(\rho\),
the \(\beta_\ell\), and the pole locations fixed, before rounding to the
exact rational data in Section~\ref{sec:exact-parameters}.

All interval computations below use Arb ball arithmetic~\cite{Johansson2017},
with outward rounding so that each interval contains the exact quantity
being computed.
The resulting pivot and Schur margins, weight mean and lower-bound ratio
are stated in Proposition~\ref{prop:directed-certificate} and its proof.
We describe how they are computed below.

\subsubsection{Evaluation of the integrals}
\label{app:lb-integrals}

To verify the Gram and Schur conditions, we evaluate
two collections of integrals.  For
\(\ell,m\in\mathcal I_{p,q}^{\Lambda}\), the Gram-matrix entries are
\[
\mathcal M_{\ell m}^{p,q}(W)
=
\sqrt{\beta_\ell\beta_m}
\int_0^\infty
\frac{e_{\ell+1-p,\ell-q}(r)e_{m+1-p,m-q}(r)}{W(r^2)}
\dd\nu(r).
\]
For the Schur condition, put \(D(r)=2\rho+W(r^2)\) and compute
\[
\begin{aligned}
R&=\int_0^\infty\frac{r^2}{D(r)}\dd\nu(r),
& b_\ell&=\int_0^\infty\frac{r\phi_\ell(r)}{D(r)}\dd\nu(r),\\
G_{\ell m}&=\int_0^\infty
\frac{\phi_\ell(r)\phi_m(r)}{D(r)}\dd\nu(r).
\end{aligned}
\]
Every entry contains either \(1/W(r^2)\) or \(1/(2\rho+W(r^2))\).
Partial fractions reduce these integrals to the moments \(J_k(\lambda)\)
defined below.

Write \(s=r^2\), so \(\dd\nu(r)=e^{-s}\dd s\).  Clearing the two positive
denominators in \eqref{eq:weight-candidate}, put
\(\Delta(s)=(s+\delta_1)(s+\delta_2)\) and \(N(s)=\Delta(s)W(s)\).
Both \(N\) and \(N+2\rho\Delta\) are polynomials of degree three, and
\[
\frac1{W(s)}=\frac{\Delta(s)}{N(s)},
\qquad
\frac1{2\rho+W(s)}
=
\frac{\Delta(s)}{N(s)+2\rho\Delta(s)}.
\]
Both denominator polynomials have real, simple, negative roots.
Indeed, \(0<\delta_1<\delta_2\)
and the rational expression for \(W\) satisfies
\[
W'(s)=\eta+\sum_{j=1}^2\frac{\zeta_j\delta_j}{(s+\delta_j)^2}>0
\qquad(s\ne-\delta_1,-\delta_2).
\]
Both \(W\) and \(W+2\rho\) increase from \(-\infty\) to \(+\infty\)
on each of \((-\infty,-\delta_2)\) and \((-\delta_2,-\delta_1)\),
and from \(-\infty\) to a positive value on \((-\delta_1,0]\).
Each therefore has exactly one simple zero in each of these three intervals.
There is no cancellation with \(\Delta\), since
\(N(-\delta_1)=-\zeta_1\delta_1(\delta_2-\delta_1)\ne0\) and
\(N(-\delta_2)=\zeta_2\delta_2(\delta_2-\delta_1)\ne0\).
The same identities hold with \(N\) replaced by
\(N+2\rho\Delta\), because \(\Delta(-\delta_j)=0\).  Thus no root is
cancelled in either quotient.  Write the roots of \(N\) as
\(-\lambda_1,-\lambda_2,-\lambda_3\), and those of
\(N+2\rho\Delta\) as
\(-\widetilde\lambda_1,-\widetilde\lambda_2,-\widetilde\lambda_3\), where
all \(\lambda_i\) and \(\widetilde\lambda_i\) are positive.  The two functions
then have the partial-fraction expansions
\[
\frac1{W(s)}
=
\sum_{i=1}^3\frac{a_i}{s+\lambda_i},
\qquad
\frac1{2\rho+W(s)}
=
\sum_{i=1}^3\frac{\widetilde a_i}{s+\widetilde\lambda_i}.
\]
Arb computes outward-rounded interval enclosures for these pole parameters and
residues.

The verifier expands the radial Hermite factors using the Laguerre convention
\[
L_j^{(\alpha)}(s)
=
\sum_{k=0}^j(-1)^k
\binom{j+\alpha}{j-k}\frac{s^k}{k!},
\]
under which
\[
e_{a,b}(r)
=
\begin{cases}
(-1)^b\sqrt{\dfrac{b!}{a!}}\,
r^{a-b}L_b^{(a-b)}(r^2),&a\ge b,\\[7pt]
(-1)^a\sqrt{\dfrac{a!}{b!}}\,
r^{b-a}L_a^{(b-a)}(r^2),&b>a.
\end{cases}
\] 

For \(k\in\mathbb N_0\) and \(\lambda>0\), define
\(J_k(\lambda)=\int_0^\infty s^ke^{-s}(s+\lambda)^{-1}\dd s\).
For example, the second partial-fraction expansion gives
\[
R
=
\int_0^\infty
\frac{s e^{-s}}{2\rho+W(s)}\dd s
=
\sum_{i=1}^3\widetilde a_iJ_1(\widetilde\lambda_i).
\]
More generally, the Laguerre formulas above turn every numerator in
\eqref{eq:residual-matrix} and \eqref{eq:woodbury-data} into a polynomial
\(P(s)=\sum_k c_ks^k\).  Substituting the first partial-fraction expansion,
for instance, gives
\[
\int_0^\infty\frac{P(s)e^{-s}}{W(s)}\dd s
=
\sum_{i=1}^3\sum_k a_ic_kJ_k(\lambda_i),
\]
and the second expansion gives the same formula with tildes.  Thus every
required matrix entry is a finite linear combination of the moments
\(J_k(\lambda)\).
They are evaluated from
\(J_0(\lambda)=e^\lambda E_1(\lambda)\) and
\(J_{k+1}(\lambda)=k!-\lambda J_k(\lambda)\), where
\(E_1(\lambda)=\int_\lambda^\infty e^{-t}t^{-1}\dd t\).
The recurrence follows from
\(
s^{k+1}/(s+\lambda)=s^k-\lambda s^k/(s+\lambda)
\).
The normalized radial Hermite products are generated with exact factorial and
binomial coefficients.  Their degrees are at most \(21\) in \(s\), and
\(\phi_{10}^2=s(L_{10}^{(1)}(s))^2/11\) already has degree \(21\); hence moments
through degree \(21\) contain every required entry.

\subsubsection{The Gram inequalities}

The verifier forms the matrices
\(I-\mathcal M^{p,q}(W)\) for
\(0\le p\le11\) and \(0\le q\le10\),
giving \(132\) nonempty blocks and \(570\) pivots in total.  Within each block
the indices \(\ell\in\mathcal I_{p,q}^{\Lambda_*}\) are ordered increasingly, and the
verifier performs an interval \(LDL^*\) decomposition without pivoting.  A
strictly positive lower endpoint for every pivot proves
\eqref{eq:residual-condition}; the smallest endpoint is the quantity in
\eqref{eq:residual-pivot-bound}, attained in the \((2,1)\) block.

\subsubsection{The Schur condition and the ratio}

The verifier substitutes the moment enclosures from
Section~\ref{app:lb-integrals} into the formulas for
the scalar \(R\), the vector
\(\boldsymbol{b}\), and the matrix \(G\).  With
\(B=\operatorname{diag}(\beta_1,\ldots,\beta_{10})\), an interval \(LDL^*\)
decomposition certifies that the exact matrix \(B^{-1}+G\) is positive
definite. Hence \((B^{-1}+G)x=\boldsymbol b\)
has a unique solution, which the verifier encloses.  It then evaluates
\(S=R-\boldsymbol{b}^Tx\) with outward rounding.  The positive lower endpoint
for \(1-S\), recorded in the proof of
Proposition~\ref{prop:directed-certificate}, proves \(S<1\).

It remains to evaluate the weight integral \(\mu_W\).  Using
\(\int_0^\infty se^{-s}\dd s=1\) and
\(J_0(\delta)=e^\delta E_1(\delta)\), the definition of \(W\) gives
\begin{equation*}
\mu_W
=
\int_0^\infty W(s)e^{-s}\dd s
=
\alpha+\eta
+\sum_{j=1}^2
\zeta_j\bigl(1-\delta_j e^{\delta_j}E_1(\delta_j)\bigr).
\end{equation*}
Outward-rounded evaluation of this formula gives the bound on \(\mu_W\)
in \eqref{eq:mu-upper}.

For the ratio, put \({U=0.716627735168304}\) and
\({L=1.35584631827168}\). Exact rational arithmetic with the stated
value of \(\rho\) gives
\(1-\rho-L(\rho+U)={6.85709570393239744\times10^{-15}}>0\).
Since \(\mu_W<U\), it follows that
\((1-\rho)/(\rho+\mu_W)>(1-\rho)/(\rho+U)>L\).
All displayed decimal parameters are interpreted exactly as rational
numbers. Arb, through python-flint \(0.8.0\), evaluates the roots,
integrals, and matrix quantities using outward-rounded ball arithmetic
at 180-digit precision.

\subsection{The pointwise bound}
\label{app:dual-verification}

To find the dual parameters, we fixed 180 logarithmically spaced rates
\(q_j/(1-q_j)\) between \(10^{-3}\) and \(10^2\). We then numerically solved
a second-order cone program for nonnegative weights \(c_j\) with
\(\sum_jc_j=1\), minimizing the maximum of
\(\sqrt{X(s)^2+Y(s)^2}\) over 1801 sampled values of \(s\).
The resulting decimals were interpreted as exact rational numbers, with
the last weight adjusted so that the weights sum to exactly one.

We first check in exact arithmetic that the 180 input pairs satisfy
\(c_j>0\), \(0<q_j<1\), and \(\sum_jc_j=1\).  Lemma~\ref{lem:atomic-matching-pair}
then gives the coefficient matching.  The remaining task is to prove the
uniform pointwise bound
\(X(s)^2+Y(s)^2<1.35584697425050\) for every \(s\ge0\).
We verify the bound on \([0,10^5]\) and estimate the remaining tail
analytically.

\subsubsection{Regularity at the origin}

To apply Taylor estimates at the origin, we write
\(Y(s)=\sqrt{s}H(s)\) and work with
\(\mathcal F(s)=X(s)^2+sH(s)^2\).

For the \(180\) exact pairs specified at the start of
Appendix~\ref{app:verification}, put
\(q_*=\max_jq_j<1\) and define
\[
\kappa_j(v)=\frac{q_je^{-v^2}}{1-q_je^{-v^2}},
\qquad
\omega_j(v)=\frac{e^{-v^2}}{(1-q_je^{-v^2})^2}.
\]
Equation \eqref{eq:Y-formula} gives
\begin{equation*}
H(s)
=
\frac2{\sqrt\pi}
\sum_{j=1}^{180}c_j
\int_0^\infty \omega_j(v)e^{-s\kappa_j(v)}\dd v.
\end{equation*}
The functions \(\kappa_j\) are uniformly bounded and the \(\omega_j\) are
integrable.  Consequently, \(H\) extends to an entire function of \(s\),
with
\begin{equation*}
H^{(k)}(s)
=
\frac{2(-1)^k}{\sqrt\pi}
\sum_{j=1}^{180}c_j
\int_0^\infty
\omega_j(v)\kappa_j(v)^ke^{-s\kappa_j(v)}\dd v.
\end{equation*}
For \(s\ge0\), each
\((-1)^kH^{(k)}(s)\) is nonnegative and nonincreasing.  Likewise,
\(-X\) is a finite positive mixture of exponentials, so
\(\abs{X^{(k)}(s)}\) is nonincreasing for every \(k\ge0\).
Moreover, \(H(0)>0\), so \(Y\) is not \(C^1\) at the origin.

\subsubsection{Verification on \texorpdfstring{\(0\le s\le10^5\)}{0 <= s <= 100000}}

We prove \(\mathcal F(s)<1.35584697425050\) for every \(0\le s\le10^5\).
An Arb calculation at \(80\)-digit precision covers this range by \(98\)
adjacent intervals.  At each left endpoint \(a\), derivatives are enclosed
by quadrature with explicit remainder bounds.
Set
\[
g_{k,a}(v)
=
\sum_{j=1}^{180}
c_j\omega_j(v)\kappa_j(v)^k e^{-a\kappa_j(v)}
\qquad(0\le k\le8).
\]
\begin{samepage} 
The verifier divides \([0,8]\) into \(600\) Simpson panels: \(200\) panels
of width \(1/200\) on \([0,1]\), \(200\) of width \(1/100\) on
\([1,3]\), and \(200\) of width \(1/40\) on \([3,8]\).  On a panel
\(J=[u,u+h]\), it uses the validated remainder
\begin{equation}
\left|
\int_J g_{k,a}(v)\dd v
-\frac h6
\left(
g_{k,a}(u)+4g_{k,a}(u+h/2)+g_{k,a}(u+h)
\right)
\right|
\le
\frac{h^5}{2880}
\sup_{v\in J}\abs{\partial_v^4g_{k,a}(v)}.
\label{eq:validated-simpson}
\end{equation}
\end{samepage}
On each panel \(J\), the verifier evaluates \(g_{k,a}\) using truncated
power-series arithmetic with interval coefficients.
The input series in the formal variable \(x\) has constant coefficient
\(J\), linear coefficient one, and all higher coefficients zero.
The resulting coefficient of \(x^4\) encloses
\(\partial_v^4g_{k,a}(v)/4!\) for every \(v\in J\).
Multiplying the maximum of the absolute values of its endpoints by
\(4!\) gives a uniform bound for the fourth derivative on \(J\), as required in
\eqref{eq:validated-simpson}.
The remaining integral satisfies
\begin{equation}
\int_8^\infty g_{k,a}(v)\dd v
\le
\frac{
q_*^k e^{-64(k+1)}
}{
16(k+1)(1-q_*e^{-64})^{k+2}
},
\label{eq:quadrature-tail}
\end{equation}
obtained by dropping \(e^{-a\kappa_j(v)}\), using \(\sum_jc_j=1\), and
applying the Gaussian-tail bound
\[
\int_8^\infty e^{-(k+1)v^2}\dd v
\le \frac{e^{-64(k+1)}}{16(k+1)}.
\]
Scaled by \(2/\sqrt\pi\), the panel sums, Simpson remainders
and tail \eqref{eq:quadrature-tail} enclose
\((-1)^kH^{(k)}(a)\).  The derivatives of \(X\) are enclosed directly from
\eqref{eq:X-formula}.

Since the derivative magnitudes of \(H\) and \(X\) are nonincreasing,
their enclosures at \(a\) give upper bounds throughout \([a,b]\).
The verifier
forms the degree-seven Taylor polynomial of \(\mathcal F\) at \(a\) and
converts it to Bernstein form on \([a,b]\).
After rescaling this interval to \([0,1]\), the Bernstein basis functions
are nonnegative and sum to one.
The largest upper endpoint of the interval coefficients therefore bounds
the polynomial throughout \([a,b]\).
The remainder is controlled by
\begin{equation*}
\left|
\mathcal F(s)-\sum_{j=0}^7\frac{\mathcal F^{(j)}(a)}{j!}(s-a)^j
\right|
\le
\frac{(b-a)^8}{8!}
\sup_{u\in[a,b]}\abs{\mathcal F^{(8)}(u)},
\qquad s\in[a,b],
\end{equation*}
together with
\begin{equation*}
\abs{\mathcal F^{(8)}(s)}
\le
\abs{(X^2)^{(8)}(s)}
+s\abs{(H^2)^{(8)}(s)}
+8\abs{(H^2)^{(7)}(s)}.
\end{equation*}
The product derivatives are bounded by Leibniz's rule, and \(s\le b\).
Rounding the largest certified upper bound over the \(98\) intervals
outward gives
\begin{equation}
\sup_{0\le s\le10^5}\mathcal F(s)
\le C_{\mathrm{cert}}:=1.355846974250492.
\label{eq:finite-cover-bound}
\end{equation} 

\subsubsection{Verification for \texorpdfstring{\(s\ge10^5\)}{s >= 100000}}

For \(s\ge10^5\), the inequality
\(
e^{-z}\le(k/e)^kz^{-k}
\)
with \(k=3/4\), applied inside \eqref{eq:Y-formula}, gives
\begin{equation*}
0\le Y(s)
\le
\frac{(k/e)^k}{\sqrt{1-k}}s^{1/2-k}
\sum_{j=1}^{180}c_jq_j^{-k}(1-q_j)^{k-2},
\qquad k=\frac34.
\end{equation*}
Its right-hand side is decreasing, as is \(\abs{X(s)}\).  Outward-rounded
evaluation at \(s=10^5\) gives
\(Y(s)<0.488503616157382\) and \(\abs{X(s)}<2.878\times10^{-50}\),
uniformly on the tail.  Thus \(X(s)^2+Y(s)^2<0.238636\) throughout the tail.
Since \(0.238636<C_{\mathrm{cert}}\), \eqref{eq:finite-cover-bound} gives
the same bound \(C_{\mathrm{cert}}\) for the supremum over the whole
half-line. The separation from the claimed threshold is
\(1.35584697425050-C_{\mathrm{cert}}=8\times10^{-15}>0\),
which proves \eqref{eq:dual-cover-bound}.

\bibliographystyle{alpha}
\bibliography{references}

\end{document}